\newcommand{\Conv}{\mathop{\scalebox{1.5}{\raisebox{-0.2ex}{$\ast$}}}}%
\newcommand{\pres}[3]{\textnormal{#1} \langle #2 \mid #3 \rangle}
\newcommand{\Z}{\mathbb{Z}}
\newcommand{\Q}{\mathbb{Q}}
\DeclareMathOperator{\SL}{SL}
\DeclareMathOperator{\GL}{GL}
\newtheorem{theorem}{Theorem} 
\newtheorem*{theorem*}{Theorem} 
\newtheorem*{HEtheorem}{Higman's Embedding Theorem} 
\numberwithin{theorem}{section}
\newtheorem*{blemma}{Britton's Lemma} 
\newtheorem{corollary}[theorem]{Corollary}
\numberwithin{lemma}{section}
\numberwithin{proposition}{section}
\newtheorem*{thmI}{Theorem I}
\newtheorem*{thmII}{Theorem II}
\newtheorem*{thmIII}{Theorem III}
\newtheorem*{thmIV}{Theorem IV}
\theoremstyle{definition}
\newtheorem*{question*}{Question}
\numberwithin{example}{section}
\newtheorem{remark}{Remark}
\newtheorem{remark*}{Remark}
\def\<{\langle}
\def\>{\rangle}
\def\-{\overline}
\def\G{\Gamma}
\begin{document}

\title[HNN Extensions and Embedding Theorems for Groups]{HNN Extensions and Embedding Theorems for Groups}

\author{Martin R. Bridson}
\address{Mathematical Institute, Andrew Wiles Building, Oxford OX2 6GG, United Kingdom}
\email{bridson@maths.ox.ac.uk}

\author{Carl-Fredrik Nyberg-Brodda}
\address{June E Huh Center for Mathematical Challenges, Korea Institute for Advanced Study (KIAS), Seoul 02455, Korea}
\email{cfnb@kias.re.kr}

\thanks{In celebration of the centenary of the Journal of the London Mathematical Society.}

\date{\today}

\keywords{HNN extension, embedding theorems,  groups acting on trees}
\subjclass[2020]{20E18,  20E26,  20F67, 57K32}

\begin{abstract}
The \textit{Higman-Neumann-Neumann (HNN) paper} of 1949 is a landmark of group theory in the twentieth century. The proof of its main theorem covers less than a page and uses only pre-existing technology, but the construction that it introduced -- the \textit{HNN extension} -- quickly became one of the principal tools of combinatorial group theory,  widely used to  build new groups and to describe enlightening decompositions of existing groups.  In this article,  we shall describe the contents of the HNN paper, and  then discuss some of the important developments that followed in its wake,  leading up to the 
central role that HNN extensions play in the Bass--Serre theory of groups acting on trees.
\end{abstract}

\maketitle


\noindent  \textit{HNN extensions} play a central role in geometric and combinatorial group theory.
They provide the principal tool for embedding groups into larger groups with prescribed properties, and they appear in topology when one decomposes the fundamental group of a space by cutting along a subspace. HNN extensions were introduced in the 1949 paper from which they derive their name, ``{\em Embedding Theorem for Groups}" \cite{HNN1949}, by G.\ Higman, B.\ H.\ Neumann, and H.\ Neumann. This remarkable paper is a model of concise mathematical writing: after a brief introduction explaining the contents, the main theorem and two major applications are exposed with clarity in less than six pages. From a modern perspective, the abiding importance of this article lies principally with its introduction of HNN extensions,  but it also contains an abundance of ideas that foreshadow
significant later developments.  

The existence of HNN extensions is the content of Theorem I of \cite{HNN1949} which 
provides a definitive answer to the question posed in the opening sentences of the article: {\em ``A group $G$ contains two subgroups $A$ and $B$. The question then arises whether a group $H$ exists, containing $G$, within which $A$ and $B$ are conjugate.}  Theorem I proves, more precisely, that any specified isomorphism from $A$ to $B$ can be realized as a conjugacy in a group $H$ that contains $G$.  Immediately after the theorem, the authors observe that there is a (canonical) minimal choice for $H$ as they recast Theorem I in the language  of {\em adjunctions} of solutions to systems of equations,  a framework that originates in an earlier paper by the second author. It is this canonical minimal choice that is now called {\em the HNN extension of $G$ associated to an isomorphism $\phi:A\to B$};  
no notation is dedicated to this group in \cite{HNN1949}, but subsequently the notation  $G\ast_\phi$ became fairly standard.  

Following Theorem I,  the authors of \cite{HNN1949} quickly generalize their main construction to cover families of isomorphic subgroups (Theorem II),  a generalization that is needed for the two applications of historic importance that they give. As a first application,  they prove that every
torsion-free group $G$ can be embedded in a group $G^*$ in which each pair of non-trivial elements is conjugate, and thereby provide the first examples of infinite,  torsion-free groups that are both countable and simple.   Secondly,  they prove that every countable group can be embedded in a group that requires only two generators. Throughout, they pay close attention to the economy of their embeddings with respect to properties such as countability,  finite generation, and finite presentation.   

Our purpose in the present article is twofold.  First, following some preliminary remarks and the setting of notation,  we will describe the contents of \cite{HNN1949} in more detail, retaining the original structure of the paper and the notation used in the statement of the theorems. We shall trace out the mathematical context in which this work took place,  outline the original proofs, and  comment briefly on how some of the ideas in the proofs subsequently evolved. In the second half of this article, we describe some of the major developments that flowed in the wake of the HNN construction, dividing these developments into two parts. First, with a focus on developments in the  1950s and 1960s, we discuss how HNN extensions are applied to build new groups. We emphasize in particular the way  they were used to clarify the existence of finitely presented groups with unsolvable word problem, and the crucial role that they played in Higman's celebrated Embedding Theorem \cite{Higman1961}; these applications were firmly in the realm of {\em combinatorial} group theory. Turning to developments beyond the 1960s, we emphasize the  role HNN extensions play in describing decompositions of groups. The flavour of these developments
is more topological, and we begin with a discussion of
van Kampen's theorem,  where  HNN extensions appear as one tries to calculate fundamental groups of spaces. This leads us to a discussion of graphs of groups and the context in which HNN extensions are studied today: the  \textit{Bass--Serre theory} of groups acting on trees, which has become a cornerstone of geometric group theory and low-dimensional topology.

\subsection*{Acknowledgments}

We thank Hyman Bass and Jean-Pierre Serre for their detailed responses to our questions about the early history of Bass--Serre theory.  We are also grateful to Ian Chiswell, Daniel E.\ Cohen, Chris Hollings, John Stillwell, and David Xu for helpful discussions.

\section{Context and Notation}\label{Sec:introduction}

\noindent For the most part,  the groups that Higman, Neumann and Neumann consider in \cite{HNN1949} are described 
using generators and relations, a formalism that was well established by 1949.   There is no whiff of topology or group
actions in \cite{HNN1949}.
From the canon of pre-existing  knowledge, the main tool they used was the theory of amalgamated free products,
and that it what we shall discuss in this section.
We will also settle on the notation to be used in the remainder of this article,   and comment on how 
notation varies in the literature.

\subsection{Amalgamated free products}\label{Subsec:amalgamated-free-products}

Amalgamated free products were introduced by Schreier in 1927 \cite{Schreier1927}, 
in connection with his proof of the \textit{Nielsen--Schreier theorem},  which states
that every subgroup of a free group is free.  They rapidly become a central tool in combinatorial group theory;
for example, \ Magnus \cite{Magnus1932} used them in 1932 in his solution to the word problem for  one-relator groups. 

We remind the reader that if $V_0$ and $V_1$ are groups and $\phi: U_0\to U_1$ is an isomorphism between subgroups $U_0<V_0$ and $U_1<V_1$, then the free product of $V_0$ and $V_1$ amalgamating the subgroups $U_i$ according to the isomorphism $\phi$ is, by definition, the quotient of the free product $V_0\ast V_1$ by the normal subgroup generated by $\{u^{-1}\phi(u) \mid u\in U_0\}$. The most important property of this construction is that the natural map from $V_i$ to the amalgamated free product is injective for $i=0,1$.
 The existence of a \textit{normal form} for elements of amalgamated free products is also crucial to many applications; 
  see \cite[p. 181]{Lyndon2001} for details.

From a modern perspective it is natural to define the amalgamated free product as the pushout in the category of groups of the diagram
\begin{equation}\label{amalgam}
V_0 \overset{i_0}\leftarrow U \overset{i_1}\rightarrow V_1
\end{equation} 
where $i_0$ (resp. ~$i_1$) is an injective homomorphism with image $U_0$ (resp. ~$U_1$) and $i_1\circ i_0^{-1} = \phi$. The somewhat sloppy notation $V_0\ast_U V_1$ is commonly used for this pushout,  despite the fact that it depends heavily on the map $\phi$; in contrast,  in \cite{HNN1949}
amalgamated free products are described in prose, without a dedicated notation, and the language is precise.  

\begin{remark}
Modern treatments of HNN extensions usually set them alongside   free
products with amalgamation, presenting these two constructions on a more or less equal
footing.  It is therefore striking, from
a historical perspective, to see how explicitly the proofs in the original HNN paper  build on
the theory of amalgamated free products.  
The final sentence of the introduction  is telling: {\em ``The principal tool throughout is the free product of two groups with amalgamated subgroups". \cite[p. 248]{HNN1949}}.  Such products were so firmly understood
in 1949 that their basic properties were quoted without proof, and Schreier's paper was
not referenced at all.
\end{remark}

\subsection{HNN extensions and their notation}\label{Subsec:HNN-background}
An  {\em HNN extension} is defined using two pieces of data:  a group $G$ and an isomorphism $\phi \colon A_0 \to A_1$ between two  subgroups $A_0, A_1$ of $G$. The \textit{HNN extension of $G$ associated to the isomorphism $\phi$,
with stable letter $t$} is the quotient of the free product $G\ast \<t\>$ by the normal subgroup generated by
$\{ t^{-1}a^{-1}t\phi(a) \mid a\in A\}$,  it is denoted by 
\[
G\ast_\phi.
\]
The two subgroups $A_0$ and $A_1$ are called the \textit{associated subgroups}. 

The notational conventions for HNN extensions are, in general, somewhat unsatisfactory. The notation that we use throughout this article, $G\ast_\phi$, is both standard and unambiguous.  However, it is common for authors to use the more informal notation $G\ast_{A_0}$, suppressing mention of the  isomorphism $\phi : A_0\to A_1$, either because it is not important to the task at hand,  or else because the author wants to focus attention on the subgroup. The more lax notation $G\ast_A$ is also common, where $A$ is a representative of the  isomorphism type of $A_0\cong A_1$; for example one might write $G\ast_\Z$ if $A_0$ and $A_1$ are infinite cyclic subgroups.   This last notation is convenient in contexts where one wants to make statements about all HNN extensions ``of the form'' $G\ast_A$,  for a fixed $G$ or for $G$ drawn from a class of groups, and for all $A<G$ of a given isomorphism type.  For example, one might want to say: {\em if $G$ is finitely presented and $A$ is finitely generated, then [any HNN extension of the form] $G\ast_A$ is finitely presented.} 

The notation for the multiple HNN extension associated to a family of isomorphisms $\phi_\sigma:A_\sigma\to B_\sigma \ (\sigma\in\Sigma)$ between subgroups of a fixed group $G$ (which we will encounter in Theorem II)  is less standardized; we shall use the notation $G\ast_{\{\phi_\sigma\}}$ for this.

\section{The 1949 HNN article}\label{Sec:HNN-article}

\noindent In this section, we will describe the contents of  the HNN article \cite{HNN1949} in more detail.  We have
retained the original phrasing of the four main theorems  as well as the original 
section headings, in order to faithfully reflect the authors' emphases and priorities.

\subsection{Isomorphism between subgroups as conjugacy in a larger group}\label{Subsec:Thm1}

\begin{thmI}\label{ThmI}
Let $\mu$ be an isomorphism of a subgroup $A$ of a group $G$ onto a second subgroup $B$ of $G$. Then there exists a group $H$ containing $G$, and an element $t$ of $H$, such that the transform by $t$ of any element of $A$ is its image under $\mu$:
\begin{equation}\label{2.1}
t^{-1}at = \mu(a)  \text{   for all  } a\in A.
\end{equation}
\end{thmI}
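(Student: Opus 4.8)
The plan is to realise $H$ as a semidirect product. For any group $L$ and any automorphism $\sigma$ of $L$, one can form $L\rtimes\langle t\rangle$ with the infinite cyclic factor $\langle t\rangle$ acting so that $t^{-1}xt=\sigma(x)$ for all $x\in L$; in this group the element $t$ realises $\sigma$ by conjugation. So it suffices to embed $G$ in a group $L$ that admits an automorphism $\sigma$ with $\sigma|_A=\mu$: taking $H:=L\rtimes\langle t\rangle$ for this $\sigma$ then gives $t^{-1}at=\mu(a)$ for every $a\in A$, with $G$ sitting inside $H$ as required. Since $\mu$ need not extend to an automorphism of $G$ itself, the real work is to manufacture such an $L$, and I would do this by splicing together a bi-infinite chain of copies of $G$.

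Fix isomorphisms $\theta_i\colon G\xrightarrow{\sim}G_i$ onto disjoint copies of $G$, one for each $i\in\mathbb{Z}$, and put $A_i=\theta_i(A)$, $B_i=\theta_i(B)$. Let $L$ be the amalgamated free product of the chain
\[
\cdots\ \ast\ G_{-1}\ \ast\ G_0\ \ast\ G_1\ \ast\ \cdots
\]
formed by identifying, for each $i$, the subgroup $B_i\le G_i$ with $A_{i+1}\le G_{i+1}$ along the isomorphism that sends $\theta_i(b)$ to $\theta_{i+1}(\mu^{-1}(b))$; equivalently, $\theta_{i+1}(a)=\theta_i(\mu(a))$ in $L$ for all $a\in A$. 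Formally $L=\varinjlim_n L_n$ with $L_n=G_{-n}\ast\cdots\ast G_n$ a finite iterated two-factor amalgam, and iterating the basic fact recalled in Section~\ref{Subsec:amalgamated-free-products} — that the two factors of an amalgamated free product inject into it — shows that each inclusion $L_n\hookrightarrow L_{n+1}$ and each map $G_0\to L_n$ is injective; hence $G\cong G_0$ embeds in $L$. The shift $\tau(\theta_i(g))=\theta_{i+1}(g)$ is a well-defined automorphism of $L$: it carries the gluing between $G_i$ and $G_{i+1}$ onto the gluing between $G_{i+1}$ and $G_{i+2}$, which is defined by the same isomorphism, and the backward shift provides its inverse. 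Crucially, $\tau$ maps $A_0$ back inside $G_0$, since $\tau(\theta_0(a))=\theta_1(a)=\theta_0(\mu(a))$; thus $\tau$ restricts to $\mu$ on the copy $A_0$ of $A$ inside $G_0$.

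Setting $H:=L\rtimes\langle t\rangle$ with $t$ acting by $\tau$ and identifying $G$ with $G_0\le L\le H$, we obtain $t^{-1}at=\tau(a)=\mu(a)$ for all $a\in A$, which is exactly the required relation. This proves Theorem I. Moreover the remark made immediately afterwards in \cite{HNN1949} — that $H$ may be taken to be the canonical minimal group $G\ast_\mu$ — then follows at once: the homomorphism $G\ast\langle t\rangle\to H$ sending $G$ onto $G_0$ and $t$ to $t$ kills each relator $t^{-1}a^{-1}t\,\mu(a)$, hence factors through $G\ast_\mu$, and since it is injective on $G$ so is the map $G\to G\ast_\mu$.

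I expect the only genuine obstacle to lie in the careful handling of the infinite amalgam: checking that the bi-infinite system of identifications is consistent, that injectivity of $G_0\hookrightarrow L_n$ survives passage to the direct limit, and — the step most easily bungled — that the identifications are arranged so that the shift carries $A_0$ onto the copy $B_0$ of $B$ lying in $G_0$, rather than merely onto $A_1\le G_1$. Beyond that, everything is bookkeeping or a direct appeal to the embedding property of two-factor amalgamated free products. One could instead take $H:=G\ast_\mu$ from the start and prove a normal form for its elements (essentially Britton's Lemma), deducing the embedding of $G$ from that; but this replaces the brief appeal to amalgams by a longer combinatorial argument, running against the grain of \cite{HNN1949}.
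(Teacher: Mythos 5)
Your proof is correct, but it takes a genuinely different route from the one in \cite{HNN1949}. The original argument is a \emph{single} two-factor amalgam: the authors form $K = G\ast\langle u\rangle$ and $L = G\ast\langle v\rangle$, observe that the subgroups $U = \langle G, u^{-1}Au\rangle \le K$ and $V = \langle G, vBv^{-1}\rangle \le L$ are internally free products $G\ast u^{-1}Au$ and $G\ast vBv^{-1}$, amalgamate $K$ and $L$ over the isomorphism $U\to V$ fixing $G$ and sending $u^{-1}au\mapsto v\mu(a)v^{-1}$, and then take $t=uv$; the HNN extension is the subgroup $\langle G,t\rangle$ of this larger $H$. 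You instead unroll the extension into a bi-infinite chain of copies of $G$ glued along $B_i\cong A_{i+1}$ via $\mu^{-1}$, pass to the direct limit $L$, and adjoin the shift $\tau$ as a semidirect factor. Both rest on the embedding property of amalgamated free products, but they reveal different structure: the HNN authors get by with one finite amalgam of two groups, no direct limits, and no semidirect products — about as economical as the background machinery allows — while your construction produces directly the kernel-over-$\mathbb{Z}$ picture of the HNN extension (the bi-infinite amalgam is precisely the normal closure of $G$ in $G\ast_\mu$, and your $H=L\rtimes\langle t\rangle$ is in fact isomorphic to $G\ast_\mu$ itself rather than a strictly larger container). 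Your version therefore anticipates the normal-form/Britton's Lemma picture more transparently, at the modest cost of the infinite-chain bookkeeping you flag at the end, which the original proof entirely sidesteps.

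Two small remarks. You have correctly identified and handled the point most likely to go wrong — that the shift must carry $A_0$ onto $B_0\le G_0$ and not merely onto $A_1\le G_1$ — via the identification $\theta_{i+1}(a)=\theta_i(\mu(a))$. And your closing observation, that $G\hookrightarrow G\ast_\mu$ follows because $G\ast\langle t\rangle\to H$ kills the HNN relators and is injective on $G$, is exactly the role played in \cite{HNN1949} by the passage from $H$ to the subgroup $H_1=\langle G,t\rangle$, which the authors there single out as the canonical minimal choice.
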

\begin{proof}[Proof sketch.]
The authors of \cite{HNN1949} first take two copies of the free product 
$G\ast\Z$,
\[
K := G\ast \<u\>, \ \ \  L := G\ast\<v\>.
\]
Next, they argue that the subgroup $U<K$ generated by $G$ and $u^{-1}Au$ is the free product of $G$ and $u^{-1}Au$, and likewise the subgroup $V<L$ generated by $G$ and $vBv^{-1}$ is the free product of $G$ and $vBv^{-1}$.  It follows that there is an isomorphism which ``maps $G$ {\em qua} subgroup of $U$ identically onto $G$ {\em qua} subgroup of $V$" and maps  $u^{-1}au$ to $v\mu(a) v^{-1}$ for each $a\in A$.  The group $H$ described in the statement of the theorem is the  free product of $K$ and $L$ with the subgroups $U$ and $V$ amalgamated via this isomorphism,  with $t:=uv\in H$.
\end{proof}

Following this proof, the authors  note in a corollary that if $G$ is torsion-free\footnote{The quaint terminology of the time for groups with no non-trivial elements of finite order was ``locally infinite''.} then so is $H$. The concise proof of this fact illustrates how firmly basic facts about amalgamated free products were established at this time: {\em For a free product of two locally infinite groups, with or without amalgamated subgroups, is locally finite.} 

They then immediately
shift attention from the group $H$ in the proof of Theorem I to the subgroup $H_1$ generated by $G$ and $t$,
which is a more canonical choice for the group whose existence is asserted in the theorem. 
It is in this sentence that the HNN extension is born:  $H_1$ is what we now call the  HNN extension of $G$ associated to 
the isomorphism $\phi:A\to B$. 
The description of $H_1$ in the following sentence is relied upon  heavily  in later sections:   ``The defining relations of $H_1$ are those of $G$ and relations (2.1)" (their (2.1) being our (\ref{2.1})).  No notation is dedicated to this group in \cite{HNN1949}, but subsequently the notation  $G\ast_\phi$ became fairly standard, as discussed briefly in \S\ref{Subsec:HNN-background}.

To the modern eye, it is obvious and noteworthy that $G\hookrightarrow H_1$ is {\em universal} in the sense that  if
 $i:G\hookrightarrow \G$ is an embedding of groups and there exists $\tau\in\G$  with $\tau^{-1}i(a)\tau = i\mu(a)$ for all $a\in A$,
 then $i$  factors through $G\hookrightarrow  H_1 = G\ast_\mu$.  This fact is not remarked upon explicitly in \cite{HNN1949}, but there is no doubt that the authors were aware of it, indeed this issue is explored in some detail in B.H.~Neumann's paper \cite{Neumann1943b} and the authors of \cite{HNN1949} translate Theorem I into the language of that paper: this translation is in Theorem ${\rm{I}}^\prime$,  which is convenient for the discussion of multiple HNN extensions in \S3.

In Theorem I, if $G$ belongs to a class of groups that is closed under the taking of subgroups, then one might ask if the group $H$ can also be taken to lie in this class.  There is a substantial literature addressing this 
question.  In the class of finite groups,  the embedding always exists:
if  $G$ is finite and  $a, b \in G$ have the same order, then one can embed $G$ in a {\em finite} group $H$ 
so that $a$ and $b$ are conjugate in $H$.  This is  an easy consequence of the fact that HNN extensions of finite groups are residually finite   \cite{Baumslag1962}.

\subsection{Infinite groups with only two classes of conjugate elements}\label{Subsec:TheoremsII-III}

Following Theorem I,  the authors of \cite{HNN1949} quickly generalize their main construction to cover families of isomorphic subgroups (Theorem II),  a generalization that is needed for the two applications of historic importance that they give. Continuing their discussion in the language of adjunctions, the authors prove that any family of HNN extensions can be carried out simultaneously, and that the stable letters used to form these extensions generate a free subgroup of the enlargement of $G$ that is obtained. Specifically, they prove the following:
 
 \begin{thmII}
 Let $\mu_\sigma$ (where $\sigma$ ranges over an index set $\Sigma$) be an isomorphism of a subgroup
 $A_\sigma$ of a group $G$ onto a second subgroup $B_\sigma$. Then there exists a group $H$ containing $G$, and containing also a group $T$ freely generated by a set of elements $t_\sigma\ (\sigma\in\Sigma)$, such that for any $\sigma$ in $\Sigma$ the transform by $t_\sigma$ of an element in $A_\sigma$ is its image under $\mu_\sigma$ :
 \begin{equation}\label{Eq3.1}
 t_\sigma^{-1} a_\sigma t_\sigma = \mu_\sigma(a_\sigma)  \text{   for all   } \sigma\in\Sigma, \ \ \ a_\sigma\in A_\sigma.
 \end{equation}
 \end{thmII}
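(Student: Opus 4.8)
The plan is to deduce Theorem II from Theorem I by a transfinite iteration, and then to obtain the freeness of the stable letters by exhibiting an explicit retraction. Well-order the index set $\Sigma$ and build an increasing chain of groups $G = H_0 \le H_1 \le \cdots \le H_\sigma \le \cdots$ as follows. At a successor stage, having constructed $H_\sigma$ (which contains $G$, hence still contains the fixed subgroups $A_\sigma, B_\sigma$ and still sees the isomorphism $\mu_\sigma$ between them), let $H_{\sigma+1} := H_\sigma \ast_{\mu_\sigma}$ be the HNN extension of $H_\sigma$ associated with $\mu_\sigma$, with stable letter $t_\sigma$; by Theorem I, in the refined form discussed immediately after its proof in which one passes to the subgroup generated by the base and the stable letter, the natural map $H_\sigma \hookrightarrow H_{\sigma+1}$ is injective and $t_\sigma^{-1} a t_\sigma = \mu_\sigma(a)$ holds in $H_{\sigma+1}$ for all $a \in A_\sigma$. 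At a limit stage $\lambda$, set $H_\lambda := \varinjlim_{\sigma < \lambda} H_\sigma$, the union of the chain; since a direct limit of injections along a chain is again injective, $H_\mu \hookrightarrow H_\lambda$ for every $\mu < \lambda$. Taking $H := \varinjlim_\sigma H_\sigma$ to be the direct limit of the whole chain, we conclude that $G$ embeds in $H$ and that all the relations \eqref{Eq3.1} hold in $H$.

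It remains to show that the stable letters $t_\sigma$ freely generate a subgroup $T \le H$. For this I would construct a homomorphism $\rho : H \to F$, where $F$ denotes the free group on symbols $\{t_\sigma : \sigma \in \Sigma\}$, which kills $G$ and fixes each $t_\sigma$: one builds $\rho$ along the chain, observing at each successor stage that the defining relators $t_\sigma^{-1} a^{-1} t_\sigma \mu_\sigma(a)$ of $H_{\sigma+1}$ are sent to $t_\sigma^{-1}\cdot 1 \cdot t_\sigma \cdot 1 = 1$ because $a$ and $\mu_\sigma(a)$ both lie in $G$, and at limit stages passing to the union. Let $\eta : F \to H$ be the natural homomorphism sending each generator to the corresponding stable letter of $H$. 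Then $\rho \circ \eta$ is the identity of $F$, since it fixes every generator, so $\eta$ is injective; hence its image $T := \langle t_\sigma : \sigma \in \Sigma \rangle$ is free on $\{t_\sigma\}$ and $\rho|_T$ is an isomorphism $T \to F$. This is exactly the assertion of Theorem II. The same retraction argument applies verbatim to the one-step quotient $G\ast_{\{\mu_\sigma\}}$ of $G \ast F$, which one checks to be canonically isomorphic to the $H$ produced above, so that this is the minimal witness.

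I expect the only genuinely non-formal points to be the two transfinite inductions, and in particular the verification that direct limits along the chain preserve injectivity at limit ordinals, which is precisely what secures $G \hookrightarrow H$; the freeness of $T$ is then essentially automatic once $\rho$ is in hand. An alternative, closer in spirit to the idiom of \cite{HNN1949}, would bypass the iteration and prove directly that $G$ embeds in $G\ast_{\{\mu_\sigma\}}$ by a normal-form (Britton-type) analysis of words involving the several stable letters; this is somewhat more laborious but has the merit of exhibiting the freeness of $\{t_\sigma\}$ directly from the normal form.
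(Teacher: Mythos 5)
Your proof is correct, and it is in the same spirit as the original argument — iterate Theorem~I, one stable letter at a time — but differs on two technical points. To handle an infinite index set, the HNN paper first defines $H$ directly by a presentation (the relations of $G$ together with \eqref{Eq3.1}) and then reduces to the case of finite $\Sigma$, invoking finitely many applications of Theorem~${\rm I}'$; the reduction works because any element of the presented group involves only finitely many of the $t_\sigma$ and only finitely many of the defining relations. You instead well-order $\Sigma$ and run a transfinite induction, taking direct limits at limit ordinals. Both approaches are valid ways of passing from the one-letter case to the general case; the finite-support reduction is slightly more in the idiom of 1949, while the transfinite chain is perhaps easier to state with complete rigour. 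For the freeness of $T$, the explicit retraction $\rho : H \to F$ killing $G$ is a clean and correct device, and it is worth noting that the same idea surfaces explicitly a bit later in the paper's treatment of Theorem~IV (killing the stable letters to retract $P$ onto $\langle u\rangle$). Your closing observation that the iteratively built $H$ is canonically isomorphic to the presented group $G\ast_{\{\mu_\sigma\}}$ is exactly what licenses defining $\rho$ at one stroke from the presentation rather than stage by stage, and matches the paper's emphasis on the universal (``minimal'') choice of $H$.
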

\begin{proof}[Proof sketch.]
 The proof of the authors begins as follows: ``We define $H$ to be the group generated by $G$ and elements $t_\sigma, \ \sigma\in\Sigma$, subject only to the relations defining $G$ and to \eqref{Eq3.1}."  After a brief argument to reduce to the case where $\Sigma$ is finite, they invoke ``a finite number of applications" of Theorem ${\rm{I}}^\prime$, and arrive at the statement of the theorem.
\end{proof}
 
We emphasize that the focus in the proof is again on the universal choice for $H$, which we today would call the (multiple) HNN extension associated to the given data. As a first application of Theorem II, the authors prove that \textit{every torsion-free group $G$ can be embedded in a group $G^*$ in which each pair of non-trivial elements of $G$ are conjugate} (Corollary~1).  
This is proved by noting that by taking $\Sigma$ to be the set of all pairs of elements  $(a, b)$ such that $a$ and $b$ have the same order in $G$, one obtains an embedding $G\hookrightarrow G'$ such that every pair of elements of $G$ that have the same order become conjugate in $G'$ (which is the minimal choice of $H$, i.e.\ the multiple HNN extension). By stating Corollary 1 and introducing the temporary notation $G'$ for the multiple HNN extension, the authors gave themselves the vocabulary to present a concise and transparent proof of the following theorem;  this thoughtful and concise style of presentation runs throughout the article.  Corollaries 2 and 3 are the observations that if $G$ is torsion-free or countable, then $G'$ will have the same property.

Section 2 of \cite{HNN1949} ends with the first of the two major applications of Theorem I. 

 \begin{thmIII} Any locally infinite group $G$ can be embedded in a group $G^*$ in which all elements except the unit element are conjugate to each other.
 \end{thmIII}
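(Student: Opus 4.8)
The plan is to bootstrap from Corollary~1 by iterating the multiple HNN construction and passing to a limit. The starting observation is that, since a locally infinite group is torsion-free, all of its non-trivial elements have the same (infinite) order; hence the group $G'$ produced by Corollary~1 already has the property that \emph{any} two non-trivial elements of $G$ are conjugate in $G'$. The difficulty is that $G'$ is strictly larger than $G$ in general: the stable letters and the words involving them are new elements of $G'$, and Corollary~1 says nothing about their conjugacy. So a single application of the construction does not finish the job.

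To remedy this I would iterate. Put $G_0 := G$ and, inductively, let $G_{i+1} := (G_i)'$ be the multiple HNN extension of $G_i$ furnished by Corollary~1, so that $G_i \hookrightarrow G_{i+1}$ and any two non-trivial elements of $G_i$ become conjugate in $G_{i+1}$. The point that makes the induction run is Corollary~2: because $G_0 = G$ is torsion-free, every $G_i$ is torsion-free, so Corollary~1 is applicable at each stage. I then set $G^*$ to be the ascending union of the chain
\[
G = G_0 \hookrightarrow G_1 \hookrightarrow G_2 \hookrightarrow \cdots,
\]
which is a group into which each $G_i$ -- in particular $G$ itself -- embeds.

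The required property of $G^*$ is now immediate: given non-trivial $x, y \in G^*$, both lie in some common $G_i$ (take the larger of the two indices at which they appear), hence $x$ and $y$ are conjugate in $G_{i+1}$, and the conjugating element already lies in $G_{i+1} \subseteq G^*$. In keeping with the economy emphasized throughout the paper, one records the further observations that $G^*$ is again torsion-free, being an ascending union of torsion-free groups, and that $G^*$ is countable whenever $G$ is, by Corollary~3 applied at each stage.

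I do not anticipate a genuine obstacle once Corollaries~1 and~2 are available; the two things to watch are conceptual rather than technical. First, one must notice that a single multiple HNN extension cannot suffice, since it only controls conjugacy among the elements one started with. Second, torsion-freeness has to be propagated along the whole chain so that the hypothesis of Corollary~1 is met at every stage -- this is precisely the role of Corollary~2 -- and the ascending-union step is the standard device for upgrading ``any two old elements become conjugate'' into ``any two elements are conjugate''.
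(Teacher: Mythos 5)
Your proposal is correct and is essentially identical to the paper's own proof: both iterate Corollary~1 to build the tower $G=G_0\subset G_1\subset\cdots$ with $G_{n+1}=G_n'$, use Corollary~2 to propagate torsion-freeness so the construction applies at every stage, and take the ascending union $G^*=\bigcup_n G_n$, in which any two non-trivial elements lie in a common $G_n$ and are therefore conjugate in $G_{n+1}$.
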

 \begin{proof}[Proof sketch.] 
 To prove this,  the authors define a sequence of groups 
 \[
 G=G_0\subset G_1\subset G_2\subset\dots
 \]
 by setting $G_{n+1}=G_n'$, ``in the notation of Corollary 1", and then form the union 
 \[ 
 G^* = \bigcup_n G_n
 \]
 `{\em in the usual ``group tower" sense}', i.e.\ they take a direct limit. This group is torsion-free by a previous corollary, and since  the non-trivial elements of $G_n$ are all conjugate in $G_{n+1}$,  the non-trivial elements of $G^*$ form a single conjugacy class.
 \end{proof}

In a corollary to Theorem III, the authors note that if $G$ is countable then $G^*$ as constructed in the proof is also countable.  

\begin{corollary}\label{Cor:TF-infinite-simple-exist}
There exist torsion-free,  infinite,  countable groups that are simple.
\end{corollary}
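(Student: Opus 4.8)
The plan is to feed a minimal torsion-free countable group into Theorem III and then read off simplicity from the conjugacy property. Concretely, apply Theorem III with $G=\Z$; this is locally infinite (i.e.\ torsion-free) and countable, so one obtains a group $G^*\supseteq\Z$ in which every element other than the identity is conjugate to every other. By the corollary to Theorem III recorded above, the construction keeps $G^*$ countable; by the corollary to Theorem II asserting that torsion-freeness is inherited by each $G'$ (and hence by the direct limit $G^*$) the group $G^*$ is again torsion-free; and since it contains $\Z$ it is infinite. Thus $G^*$ has the first three desired properties for free, and the only point left to address is simplicity.

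That point is a one-line consequence of the defining property of $G^*$. Let $N$ be a normal subgroup of $G^*$ with $N\neq\{1\}$, and choose $n\in N$ with $n\neq 1$. If $g\in G^*$ is any non-trivial element, then $g$ is conjugate to $n$ in $G^*$, say $g=h^{-1}nh$ with $h\in G^*$; normality of $N$ then gives $g\in N$. Hence $N$ contains every non-trivial element of $G^*$, so $N=G^*$. Therefore $G^*$ has no normal subgroup other than $\{1\}$ and itself, and as $G^*\neq\{1\}$ it is simple; combined with the previous paragraph this yields a torsion-free, infinite, countable simple group.

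The real obstacle does not lie in this corollary at all: it was already dispatched in Theorem III and its corollaries, whose genuine content is the construction of a group whose non-trivial elements form a single conjugacy class while torsion and cardinality are simultaneously controlled. Granting those results, the only thing to verify here is the normal-subgroup argument above, which presents no difficulty. One could add, for emphasis, that such a $G^*$ is automatically non-abelian, since in an abelian group conjugation is trivial and a single non-trivial conjugacy class would force $|G^*|\le 2$, contradicting the fact that $G^*$ is infinite.
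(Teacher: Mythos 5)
Your argument is correct and follows the same route as the paper: apply Theorem III to a torsion-free countable seed group (here $\Z$), invoke the corollaries to Theorems II and III to keep the result countable and torsion-free, and observe that a group in which all non-trivial elements form one conjugacy class is simple. The paper leaves the final one-line normality argument implicit, but the content is identical.
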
 
 
The authors do not repeat the observation from their introduction that their construction gives the first examples of groups with the above property, but they do note that by varying the initial group $G$ one obtains continuously many pairwise non-isomorphic groups $G^*$. It seems likely that the authors discussed whether their proof of Theorems II and III could be adapted to construct a \textit{finitely generated} infinite group in which all non-trivial elements are conjugate. Such groups do exist, but the technology required to prove this was not available in 1949: the first  examples were constructed by D.\ Osin in 2010 \cite{Osin2010} using small cancellation techniques and the theory of relatively hyperbolic groups. 

The first finitely generated infinite simple groups were 
  constructed by Graham Higman \cite{Higman1951} in 1951. Using HNN extensions and
  amalgamated free products,  he exhibited a group with four generators and four defining relations that is infinite
  but has no non-trivial finite quotients; the quotient of such a  group by a maximal proper normal 
  subgroup is finitely generated,  infinite, and simple.  
In 1953, Higman's doctoral student R.\ Camm \cite{Camm1953} exhibited  uncountably many finitely generated infinite simple groups that decompose as amalgamated free products of free groups $F_2 \ast_{F_\infty} F_2$. 
R.\ Thompson (in unpublished notes) found the first examples of finitely presented infinite simple groups in 1965 (cf.\ also Higman \cite{Higman1974}).  Using geometric methods (groups acting freely on products of trees),  in 2000 Burger--Mozes \cite{Burger2000} constructed infinite families
  of finitely presented infinite simple groups that decompose as amalgamated free products 
  $F_n \ast_{F_m} F_k$.

\subsection{Embedding countable groups in two-generator groups}\label{Subsec:TheoremIV}

The next result in \cite{HNN1949}
 is the famous theorem that every countable group can be embedded in a group that requires only two generators, and furthermore that this enlargement of the group can be achieved without increasing the number of defining relations. 
 The analogous result for
 semigroups is considerably easier, and  had  been obtained earlier by Marshall Hall Jr. \cite{Hall1949}. Given a semigroup $S$ generated by   $g_1, g_2, \dots$, the map 
\begin{equation}\label{Eq:semigroup-embedding}
g_i \mapsto baba^{i+2}baab
\end{equation}
embeds $S$ in the semigroup on two generators $a, b$ with the same defining relations as $S$, rewritten as words in $\{ a, b\}$
using the given substitution (cf.\ \eqref{g_i} below).   

\begin{thmIV}
Any countable group $G$ can be embedded in a group $H$ generated by only two elements. If the number of defining relations for $G$ is $n$, the number of defining relations for $H$ can be taken to be $n$.
 \end{thmIV}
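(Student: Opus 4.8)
The plan is to produce the embedding inside an HNN extension of a free product, arranged so that the resulting group is generated by the stable letter together with one further element, and then to read off the statement about defining relations by rewriting the resulting presentation. We may assume $G\neq 1$ (the case $G=1$ being trivial) and fix a finite or countably infinite generating set $g_1,g_2,\dots$ of $G$ consisting of non-trivial elements; write $G=\pres{}{g_1,g_2,\dots}{r_1,\dots,r_n}$, with each relator $r_j$ a word in the $g_i^{\pm1}$.

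First I would pass to the free product $K:=G\ast F$ with $F=\<a,b\>$ free of rank $2$, and note that $G$ embeds in $K$. Inside $F<K$ the elements $c_i:=b^{-i}ab^i$ ($i\ge 0$) freely generate a free subgroup $C$ of $K$ --- a standard fact about free groups. The crucial point is that the elements
\[
b,\ g_1c_1,\ g_2c_2,\ \dots
\]
likewise freely generate a free subgroup $D<K$, of the same rank as $C$: this follows from a routine normal-form argument in $G\ast F$, using that each $g_i$ is non-trivial and that the exponents $\pm i$ occurring in the $c_i$ are pairwise distinct, so that no unexpected cancellation can take place. Consequently the assignment $c_0\mapsto b$, $c_i\mapsto g_ic_i$ ($i\ge1$) extends to an isomorphism $\phi\colon C\to D$ between these two free subgroups of $K$.

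Next I would invoke Theorem~I, applied to $K$ and the isomorphism $\phi$, and pass to the HNN extension $H:=K\ast_\phi$ with stable letter $t$, whose defining relations over $K$ are $t^{-1}c_it=\phi(c_i)$. Since $G\hookrightarrow K\hookrightarrow H$, the group $G$ embeds in $H$. The relation indexed by $i=0$ reads $t^{-1}at=b$, so $a=tbt^{-1}\in\<b,t\>$; hence every $c_i=b^{-i}ab^i$ lies in $\<b,t\>$, and then the relation indexed by $i\ge1$ gives $g_i=(t^{-1}c_it)\,c_i^{-1}\in\<b,t\>$. As $H$ is generated by $\{g_1,g_2,\dots\}\cup\{a,b,t\}$, we conclude that $H=\<b,t\>$ is a two-generator group, which is the first assertion.

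Finally, for the statement about defining relations I would begin with the presentation
\[
H=\pres{}{g_1,g_2,\dots,\,a,\,b,\,t}{r_1,\dots,r_n,\ t^{-1}at=b,\ t^{-1}c_it=g_ic_i\ (i\ge1)}
\]
and apply Tietze transformations: use $t^{-1}at=b$ to delete the generator $a$ (substituting $tbt^{-1}$ throughout), and for each $i\ge1$ use $t^{-1}c_it=g_ic_i$, rewritten as $g_i=W_i(b,t)$ for an explicit word $W_i$, to delete the generator $g_i$. Each such move removes one generator together with one relation, leaving $H=\pres{}{b,t}{r_1',\dots,r_n'}$, where $r_j'$ is obtained from $r_j$ by replacing every $g_i$ with $W_i(b,t)$; this presentation has exactly $n$ defining relations. (Equivalently, one verifies directly that $g_i\mapsto W_i(b,t)$, $a\mapsto tbt^{-1}$ induces mutually inverse homomorphisms between the two presented groups.) The substitution $g_i\mapsto W_i(b,t)$ is the group-theoretic counterpart of Marshall Hall's semigroup substitution in \eqref{Eq:semigroup-embedding}. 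I expect the main obstacle to be the freeness claim for the family $b,g_1c_1,g_2c_2,\dots$ in $K$: this is precisely the step where the choice of words matters --- dropping $b$ from the list, or allowing some $g_i$ to be trivial, would destroy freeness, since the $c_i$ already lie in a free group of rank $2$ --- whereas the (infinitely many) Tietze moves, though they ought to be justified with a word, present no essential difficulty.
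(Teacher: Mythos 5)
Your construction is a genuine and appealing streamlining of the paper's four-step chain
$K = G\ast\Z$, then $P = K\ast_{\{\phi_i\}}$ (a multiple HNN extension with one stable
letter per generator), then $Q = P\ast_L F$ (an amalgamated free product with a rank-$2$
free group), then $H = Q\ast_\mu$: you form $K = G\ast F_2$ directly and achieve everything
with a single HNN extension across two free subgroups. The back end of your argument ---
observing that $t^{-1}at=b$ forces $a$, hence every $c_i$, hence every $g_i$, into
$\langle b,t\rangle$, and then counting relators via Tietze eliminations of $a$ and the
$g_i$ --- is exactly right, and, like the paper, you end up with explicit words
$W_i(b,t)$ for the generators, from which the relation count follows. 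The gap is in the
key freeness claim: non-triviality of the $g_i$ is \emph{not} enough, and the
``routine normal-form argument'' you invoke does not cover what can actually go wrong.

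You must also require the $g_i$ to be pairwise distinct as elements of $G$. If, say,
$g_1=g_2=g_3=g$ is the non-trivial element of $G=\Z/2$ --- a redundant but legitimate
generating sequence of non-trivial elements --- then the word
$(g_1c_1)\,b\,(g_2c_2)^{-1}(g_3c_3)\,b^{-1}(g_2c_2)^{-1}$ is reduced in the letters
$b,\,g_ic_i$, yet it equals
$g\cdot\bigl(c_1\,b\,c_2^{-1}c_3\,b^{-1}c_2^{-1}\bigr)\cdot g^{-1}=g\cdot1\cdot g^{-1}=1$
in $K$, because the bracketed element vanishes in $F(a,b)$ (a short direct check:
$c_1bc_2^{-1}=b$, then $bc_3b^{-1}c_2^{-1}=1$). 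So $\phi$ fails to be injective and the
HNN extension $K\ast_\phi$ cannot be formed. The mechanism is that coinciding $g_i$ let a
merged $G$-syllable $g_{i_j}^{-1}g_{i_{j+1}}$ collapse, after which the flanking
$F$-syllables merge and can cancel in $F(a,b)$; with pairwise distinct non-trivial $g_i$
the reducedness of the word blocks exactly this, and the freeness does hold. The fix costs
nothing --- from any presentation of $G$ with $n$ relators, a pair of Tietze moves (adjoin
the redundant relator $g_ig_j^{-1}$, or $g_i$ if $g_i=1$, then eliminate that generator)
gives a presentation with the same number of relators and pairwise distinct, non-trivial
generators --- but you should make this reduction explicit. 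Note that the paper's route
sidesteps the issue entirely by working with $u_i=g_iu$ for a \emph{fresh} free-product
letter $u$, so the associated subgroups $\langle u\rangle,\langle u_i\rangle$ are infinite
cyclic and distinctness of the $g_i$ never enters; this is one respect in which the longer
construction is more robust.
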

 \begin{proof}[Proof sketch.] 
 
 The proof begins with a generating set $g_1,g_2,g_3,\dots$ for $G$.  One reduces to the case where
the generators all have infinite order by replacing $G$ with $K=G\ast\Z = G \ast \<u\>$; let $u_i=g_iu$. A multiple HNN extension of $K$,  denoted by $P$, is considered; this has stable letters $t_1, t_2,\dots$ with $t_i^{-1}ut_i:=u_i$ for all $i$. Theorem II assures us that the natural map from $K$ to $P$ is injective and that the elements $t_i$ freely generate a free subgroup $T<P$.  Note that $P$ is generated by $\{u, t_1, t_2,\dots\}$.  A key point to observe is that by killing the $t_i$ we obtain a retraction $P\to \<u\>$.

The next step in the construction achieves the aim of embedding $G$ in a finitely generated group: the authors take a free group $F_1$ freely generated by two new symbols $b,v$,  fix a basis for a subgroup of infinite rank -- $\{s_i = b^{-i}vb^i \mid i=1,2,\dots\}$ is a convenient choice -- and then form the free product with amalgamation that identifies $s_i$ with $t_i$ for $i=1,2,\dots$; let $Q$ denote the resulting group. A key point to observe is that $Q$ is generated by $\{u,b,v\}$.  Another key point is that $F_2=\<b,u\><Q$ is free of rank $2$, as one sees by extending the retraction $P\to \<u\>$ to an epimorphism $Q\to\<u\>\ast\<b\>$ that maps  $v$ trivially.  To complete the proof,  $H$ is defined to be the HNN extension with stable letter $a$ corresponding
to the isomorphism $\mu: F_1\to F_2$ defined by $b\mapsto u,\  v\mapsto b$.  As
$a^{-1}ba=u$ and $a^{-1}va=b$,  we see that $H$ is generated by $\{a,b\}$. This proves the first part of the theorem.

For the second part, we note that the groups constructed at the different stages of this proof are 
\begin{equation}\label{buildH}
K= G\ast \Z,\    \  \  P=  K\ast_{\{\phi_i\}},  \  Q= P\ast_L F,\  \  \  H= Q\ast_{\mu},
\end{equation}
where the  family of isomorphisms $\phi_i:\<u_i\>\to \<u\>$ and the rank of the free group $L$ are finite if and only if the  original generating set $g_1,g_2,\dots$ for $G$ is finite. The authors are careful to keep track of the defining relations of these intermediate groups, and  this careful bookkeeping enables them to prove the assertion about the number of relations needed to present $H$.  Specifically,  their argument begins with a choice of generating set $g_1, g_2, g_3, \dots$ for $G$, and it produces a two-element generating set $\{a,b\}$ for $H$ so that, for all $i$,
\begin{equation}\label{g_i}
g_i = a^{-1}b^{-1}ab^{-i}ab^{-1}a^{-1}b^ia^{-1}bab^{-i}aba^{-1}b^i.
\end{equation}
(It is worth comparing \eqref{g_i} with  \eqref{Eq:semigroup-embedding}). It follows that from each defining relation in $G$, say $R_\lambda(g_1,g_2,\dots)=1$, we obtain a valid relation $R_\lambda(a,b)=1$ in $H$ by formally substituting  the $\{a,b\}$-words of (\ref{g_i}) in place of $g_1,g_2,\dots$.  From this, one concludes that if $\< g_1,g_2,\dots \mid R_1,\dots, R_n\>$ is a presentation of $G$, then $\< a, b \mid R_1',\dots, R_n'\>$ is a presentation of $H$.
 \end{proof}

The authors end \cite{HNN1949} with an important remark: there cannot exist a single two-generator group that contains every countable group because a finitely generated group has only countably many finitely generated subgroups, whereas there are uncountably many pairwise non-isomorphic two-generator groups \cite{Neumann1937}. Interestingly, they use the word ``universal" (their quotation marks) to describe the putative containing group that they have observed does not exist. They do not comment on the fact that this argument with cardinalities does not extend to finitely presented groups, since there are only countably many such groups up to isomorphism, but one imagines that they discussed it.  The existence of a ``universal" finitely presented group  i.e.~a finitely presented group that contains an isomorphic copy of every finitely presented group, was established by Higman twelve years later \cite{Higman1961}. We will discuss this in greater detail in \S\ref{Subsec:Higman-embedding}.

Finally, in a note added in proof, the authors recount that they learned from Hirsch that the basic problems solved in \cite{HNN1949} were raised in the 1944 Russian edition of Kurosh's textbook \cite[pp.~356-357]{Kurosh1944ed1}. This discussion was removed in subsequent editions of Kurosh's textbook, including the influential English edition, and was replaced by a brief discussion of HNN extensions in the 1956 edition \cite[Chapter IX--X]{Kurosh1956}. Intriguingly, the authors report that Freudenthal had obtained Theorems IV and V independently; it seems that that work was never published.  

We conclude our discussion of \cite{HNN1949} with three brief remarks.

\begin{remark}[Finiteness Properties]
The repeated emphasis  of the authors on the economical nature of their constructions -- i.e. the smallness of the extensions formed -- is noteworthy and it has remained an important feature of embedding theorems up to the present day. It is clear that for Higman, Neumann and Neumann, being countable was an important measure of smallness,  as were finite generation and finite presentation. In Theorem IV, if one begins with a finitely presented group $G$,  
then the finite presentability of $H$ can be gleaned immediately from (\ref{buildH}). The way in which one deduces finite presentation from (\ref{buildH}) extends readily to higher finiteness properties, which did not become 
a focus of attention until group theory became entwined with algebraic topology in the 1960s.

For each integer $d\ge 0$ one defines a group $G$ to be {\em of type ${\rm{F}}_d$} 
if there is a connected CW-complex $X$ with  fundamental group $G$ such that the universal cover of $X$ is contractible, and $X$ has only finitely many $k$-cells in dimensions $k\le d$.  A group is finitely generated if and only if it is of type ${\rm{F}}_1$, and it is finitely presented if and 
only if it is of type ${\rm{F}}_2$.  A homological variant of this definition defines the weaker conditions ${\rm{FP}}_d$. A standard, straightforward argument shows that all of these conditions are preserved by amalgamated free products and HNN extensions in which the amalgamated (or associated) subgroups are finitely generated and free. Thus it follows from  (\ref{buildH}) that the embedding $G\hookrightarrow H$ in Theorem IV preserves all of these properties. Likewise, any other property that is preserved under such combinations over free groups will be transmitted from $G$ to $H$, for example the property of being torsion-free,  or of having only finitely many conjugacy classes of elements of finite order.
\end{remark} 

\begin{remark}[The computable nature of the construction]
It is worth reflecting on the explicit, algorithmic nature of the construction in Theorem IV:
given a presentation of $G$,  from (\ref{g_i}) one immediately obtains  an explicit presentation of $H$ and an explicit  embedding $G\hookrightarrow H$. The authors of \cite{HNN1949} do not comment directly on the explicitness of the construction, but they do shine a 
light on it by reinterpreting Theorem IV as a striking statement (Theorem V)
about the normal subgroup of the free group $F(a,b)$
generated by the words  appearing on the right-hand side of (\ref{g_i}). Later, in 1961, Higman would use the computability of the embedding $G \hookrightarrow H$ in the proof of one of the corollaries to his  \textit{embedding theorem}, which we discuss in \S\ref{Subsec:Higman-embedding}.
\end{remark}

\begin{remark}
It is noteworthy that  at various points in \cite{HNN1949} it is emphasized that the basic operations of HNN extension and amalgamated free product can be performed multiple times, simultaneously or iteratively; indeed this is a crucial feature of the applications that are given. This multiplicity foreshadows Serre's notion of a {\em graph of groups}, which we will discuss in detail in \S\ref{Subsec:Bass-Serre-theory}.
\end{remark}

\section{Constructing new groups with HNN extensions}\label{Sec:Constructing-new-groups}

\noindent We turn now to  the impact of HNN extensions in the subsequent literature. In this section, we will focus on how HNN extensions were used in combinatorial group theory to \textit{construct} finitely presented groups with various rather wild properties, including groups with undecidable word problem and finitely presented groups that are universal in the sense that
they contain an isomorphic copy of every finitely presented group.

\subsection{The word problem}\label{Subsec:word-problem}

An early and important  application of HNN extensions was in providing simplifications and conceptual explanations for an array of disparate constructions related to the \textit{word problem} for finitely presented groups. We recall that this problem, whose fundamental importance was emphasized by Dehn \cite{Dehn1912},  asks for an algorithm that, given two words $u, v$ in the generators of a group,  decides whether or not $u$ and $v$ represent the same element. One of the key driving forces behind the application of HNN extensions in this setting was John L.\ Britton. Today,  what is regarded \textit{the} crucial lemma concerning HNN extensions bears his name, and we shall briefly sketch how this came to be. Britton had been heavily influenced by the Neumanns from an early stage in his career, having completed a master's degree under H.\ Neumann at the University College of Hull in 1951, and a PhD under B.\ H.\ Neumann at the (Victoria) University of Manchester in 1953. In his thesis, the word problem plays a central role, but there is no mention of HNN extensions; instead, Britton extends much of the work of Tartakovskii \cite{Tartakovskii1949a, Tartakovskii1949b} on early small cancellation theory, via a series of technical lemmas. The key focus here lies in \textit{solving} the word problem in particular classes of groups, and the general undecidability of the word problem, announced in 1952 by P.\ S.\ Novikov \cite{Novikov1952}\footnote{It bears mentioning that the group specified in Novikov's 1952 announcement paper \cite{Novikov1952} may actually have had \textit{decidable} word problem, much to the dismay of Boone, see \cite[p. 169]{Collins1985},  but a different group appeared in the full 1955 article \cite{Novikov1955}.}, was still some way from becoming a generally accepted truth.\footnote{Indeed, Britton \cite[p. 9]{Britton1953} mentions that ``[Novikov's] proof does not yet seem to have found general acceptance''. This skeptical tone may have originated with B.\ H.\ Neumann, who in the early 1950s had claimed a proof of the general \textit{decidability} of the word problem for groups, at the same time as A.\ Turing claimed a proof of the \textit{undecidability} of the same problem \cite[p. 169]{Collins1985}. Both later retracted their claims.}

A few years later, Britton would return to the word problem in two articles. At the same time, P.\ S.\ Novikov \cite{Novikov1955} and W.\ Boone \cite{Boone1957} had, in 1955 and 1957, respectively, published their independent proofs of the general undecidability of the word problem for groups.  Britton had also arrived at this conclusion in 1957, independently of Boone's work, but drawing inspiration from Novikov's article (which he read in the original Russian, cf.\ \cite[Footnote 3]{Boone1959}). One of the drawbacks of the arguments of  Boone and Novikov was that they both failed to display a \textit{conceptual} reason for the undecidability of the word problem. Indeed, reading the articles today, one is quickly overwhelmed by the heavy symbolic notation designed for the specific theorem in mind; one must take a plunge into the argument at the start of the article, and endure to the very end before gaining any satisfaction. This shortcoming was noted by R.\ Fox (in 1955) and by Higman (1957), who had already pointed out to Boone,  in personal communication,   that several of his arguments could be simplified by the use of HNN extensions (see \cite[\S37]{Boone1959}). This first step towards a more conceptual proof forms part of what Britton did in his first article on the undecidability of the word problem. A key part of his argument was the following, which essentially\footnote{The principal lemma, as stated in \cite{Britton1958}, contains the additional but ultimately superfluous assumption that the associated subgroups intersect trivially. Britton's first complete statement of Britton's Lemma, as given in this article, is from his second article \cite[Lemma~4]{Britton1963}, see below.} forms the second half of what he calls the ``Principal Lemma'' \cite[Lemma~4(2)]{Britton1958}, 
 a result which today bears his name. 

\begin{blemma}\label{BLemma}
Let $G^\ast$ be an HNN extension of a group $G = \pres{}{A}{R}$ with stable letter $t$ and associated subgroups $H_1, H_2$, with isomorphism $\varphi \colon H_1 \to H_2$. If $w$ is a freely reduced word in $A^{\pm 1}$ and $t^{\pm 1}$ such that $w = 1$ in $G^\ast$, then either:
\begin{enumerate}[label=(\roman*)]
\item $w$ contains no occurrence of $t$ or $t^{-1}$, and $w = 1$ in $G$; or
\item $w$ contains a subword of the form $t^{-1} u_1 t$, where $u_1 \in H_1$; or
\item $w$ contains a subword of the form $t u_2 t^{-1}$, where $u_2 \in H_2$.
\end{enumerate}
\end{blemma}
\begin{proof}[Proof sketch.]  
Britton deals first with the case $H_1 = H_2$,
where he observes that $G^\ast$ is  the free product of $G$ and $H_1 \rtimes_\phi \Z$,  
amalgamated over the common subgroup $H_1$,  and therefore
 the normal form theorem for amalgamated free products can be applied.
He reduces the general case to this special case by manipulating the double HNN extension of $G$ in which there are two stable letters,  each conjugating $H_1$ to $H_2$ via the isomorphism $\phi$.
\end{proof} 

Britton made good use of this lemma to simplify arguments concerning equalities and non-equalities of certain words in the group constructed by Boone and Novikov. However,  the starting point of the refined argument, and many of its technical steps, remained fundamentally non-algebraic,  concentrating instead on the undecidability of a certain problem concerning Turing machines, similar to the approach by Turing to prove undecidability of the word problem in finitely presented cancellative semigroups \cite{Turing1950, Boone1958}.  Britton's proof
also suffered from a heavy technical step involving the \textit{decidability} of the word problem in certain groups,  falling back on his earlier work \cite{Britton1953,Britton1957a, Britton1957b} on small cancellation theory to do so. 

In 1963, Britton remedied both of these flaws, and in doing so finally produced what may be called the first approachable proof of the undecidability of the word problem for groups. His starting point is algebraic, beginning with the existence of a finitely presented semigroup $S$ with undecidable word problem, as shown by Markov \cite{Markov1947} and Post \cite{Post1947}. This semigroup $S$ is then encoded into a group, which is used, by a sequence of HNN extensions, to construct a finitely presented group $G$ such that the word problem for $G$ is equivalent to that of $S$, yielding undecidability. Boone's 1957 argument had the same starting point, but the passage through HNN extensions and Britton's Lemma transforms this argument into a fully group-theoretic one.  Britton also makes a key, and seemingly original, distinction between \textit{combinatorial} arguments versus \textit{group-theoretic} arguments \cite[p. ~16]{Britton1963}; in modern language, the distinction can be summarized as being arguments about a presentation of a group versus arguments about the group itself. A curious fact is that Britton does not cite the HNN-article \cite{HNN1949} but instead refers to the book of Kurosh \cite[Chapter IX--X]{Kurosh1956} on group theory for the HNN theorem as well as the general theory of amalgamated free products.

The undecidability of many other algorithmic problems in group theory also make heavy use of HNN extensions, and Britton's Lemma, to simplify the heavy combinatorial arguments. We mention only one other instance, to highlight the dramatic simplifications that can take place. The \textit{Adian--Rabin theorem} states, roughly speaking, that for many ``reasonable'' group properties $\mathcal{P}$ (including being trivial, being infinite, being simple, being abelian, ...), there is no algorithm which takes as input a finite group presentation, and decides whether or not the group defined by that presentation has property $\mathcal{P}$ or not. This implies, among other things, that the isomorphism problem for finitely presented groups is undecidable. Both Adian and Rabin had as a starting point the existence of a finitely presented group with undecidable word problem. Starting from this, the proof by Rabin \cite{Rabin1958} covers around 10--15 pages, leaving some details aside, and leveraging amalgamated free products; the proof by Adian \cite{Adian1955, Adian1957} (cf.\ translations and commentary in \cite{NybergBrodda2022}) is essentially  self-contained, and covers almost 60 pages of rather brutal combinatorial details. By contrast,  by using HNN extensions  and Britton's Lemma, one can give  a conceptual proof of the Adian-Rabin theorem,   easily understandable to a modern student,  can be given in less than two pages, as in Baumslag's textbook \cite[pp.\ 112--113]{Baumslag1993} or Miller's survey \cite[pp. 13--16]{Miller1992}.

\subsection{The embedding theorem of Higman}\label{Subsec:Higman-embedding}

In  section \S\ref{Subsec:word-problem}, we discussed how HNN extensions were used to simplify the proof of a deep theorem
and make it more conceptual.  A similar scenario played out in the case of the \textit{Higman Embedding Theorem}.
This theorem, proved in 1961, 
 was a crowning achievement of combinatorial group theory  in the twentieth century.
In this section, we will give a brief sketch of the history of this theorem,  up to the appearance of the standard modern proof,
which is due to Aanderaa \cite{Aanderaa1973}. 

A finitely generated group  $G$ is said to be \textit{recursively presented} if it admits a presentation $\pres{}{A}{R}$ in which $A$ is finite and $R$ is a recursively enumerable subset of the free group $F(A)$ on $A$.   
Thus any finitely presented group is recursively presented. 
Moreover, any finitely generated subgroup of a finitely presented group is recursively presented: if $H<G$ is generated by a finite subset $A$,  then $G$ has a finite presentation of the form 
$G=\<A\cup B \mid S\>$,  and $H=\<A\mid R\>$ where the list of words  $R\subset F(A)$ is obtained
by enumerating all finite products in $F(A\cup B)$ of conjugates  of elements from $S$,  
freely reducing each product and adding it to $R$ if the free reduction contains only letters from $A$.
Higman's remarkable theorem tells us that all finitely generated,  recursively presented groups arise in this way.

\begin{HEtheorem}[{\cite[Theorem~1, p.\ 456]{Higman1961}}] 
A finitely generated group  can be embedded in a  finitely presented group if and only if it  is recursively presented.
\end{HEtheorem}
 
Higman's proof of the Embedding Theorem  is long and technical,  so we will only give a brief overview.  The first 
part of the proof is devoid of group theory: it consists of a characterization of the (recursively enumerable) set $E$ of functions $f \colon \Z \to \Z$ with finite support,  building it out of smaller sets by a simple set of operations, in the spirit of Kleene. Next, the notion of ``benign'' subgroup is introduced:  a subgroup $H$ of a finitely generated group $G$  is \textit{benign} if the HNN extension $G\ast_H$ corresponding to the identity map ${\rm{id}}_H$  can be embedded in a finitely presented group. This condition is shown to be equivalent, via a series of arguments involving amalgamated free products and Theorem I
of \cite{HNN1949} to the existence of a finitely presented group $K$ that contains $G$ and has a finitely generated subgroup $L$ 
with $G \cap L = H$.   Higman  links these preparatory constructions by exhibiting 
 a correspondence between the elements of the set
$E$ and the elements of a free group $F$, and between subsets $X \subset E$ and certain subgroups $A_X \leq F$; the
correspondence is such that  $X$ is recursively enumerable if and only if $A_X$ is benign.  In the final section of the paper,
he uses this correspondence to prove the Embedding Theorem, prefacing the remaining lemmas with a sigh of relief: ``{\em We have now done all the hard work, and it is relatively easy to complete the
proof."} 

From the standpoint of the current article, it is worth emphasizing that Higman's proof makes liberal and crucial 
use of HNN extensions, but in a rather more convoluted way than one might expect.
Moreover,  a significant portion of his paper is devoted to purely recursive function-theoretic arguments.
In 1968, Valiev \cite{Valiev1968} simplified Higman's proof by starting with a characterization of recursively enumerable sets
that had arisen from work on Hilbert's 10th Problem.; this removed the need for Higman's analysis of the set $E$.  In 1970 Matiyasevich settled Hilbert's 10th problem by showing
that all recursive subsets of $\Z^n$ are Diophantine \cite{Matiyasevich1970},  and if one takes this as a starting point,  one can follow
Valiev's argument to simplify his proof further.   This is the approach taken in Section IV.7 of 
Lyndon and Schupp's book \cite{Lyndon2001}. Finally, in 1973, S.\ Aanderaa gave a purely group-theoretic proof of Higman's Embedding Theorem, which begins with a simple encoding of an arbitrary Turing machine into a (semi)group presentation, and proceeds to build the Higman embedding through a sequence of HNN extensions and amalgamated free products. This is the proof that is most often given and adapted in modern treatments of
the Higman Embedding Theorem and refinements of it.

In the introduction to his paper, immediately after stating the Embedding Theorem,  
Higman generalizes the non-trivial direction of the theorem to cover recursively presented groups that
are not finitely generated, and this generalization is needed for the  applications that we want to discuss now.
  By definition, a recursive presentation of a countable group $G$ consists of a free group $F$ of
finite or countable rank,  with an enumeration of a basis $a_1,a_1,\dots$ and an epimorphism $F\twoheadrightarrow G$
whose kernel is the normal closure of a recursively enumerable subset of $F$; if such a presentation exists, then $G$ is said to be recursively presented.  Theorem II in the HNN paper \cite{HNN1949} provides
an embedding of $G$ into a 2-generator group $G^*$,  and Higman notes that 
if $G$ is recursively presented,   the embedding is effective and  $G^*$ is recursively presented, so he can apply his 
theorem to $G^*$.

\begin{corollary}\label{c:not-fg}
Any recursively presented group can be effectively embedded in a finitely presented group.
\end{corollary}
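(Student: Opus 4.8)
The plan is to reduce the corollary to the finitely generated case, where the Higman Embedding Theorem applies directly, and then to track effectivity through the reduction. Let $G$ be a recursively presented group, say $G = \pres{}{a_1, a_2, \dots}{R}$ with $R$ a recursively enumerable subset of the free group on $\{a_i\}$. First I would feed this presentation into the $2$-generator embedding of Theorem IV of \cite{HNN1949} (whose proof rests on the multiple HNN extension of Theorem II): it produces a group $H$ generated by two elements $a, b$, containing $G$, together with a presentation of $H$ of the shape $\pres{}{a,b}{R'}$ in which $R'$ is obtained from $R$ by the purely mechanical substitution \eqref{g_i} of fixed words in $\{a,b\}$ for the generators $g_i = a_i$. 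Since $R$ is recursively enumerable and this substitution is computable, $R'$ is recursively enumerable; hence $H$ is a finitely generated, recursively presented group, and a recursive presentation of $H$ together with the inclusion $G \hookrightarrow H$ is produced algorithmically from the given data for $G$. The point that no relators beyond $R'$ are needed---so that recursive presentability is not spoilt by the auxiliary free products, HNN extensions and amalgams of \eqref{buildH}, nor by the preliminary passage from $G$ to $G\ast\Z$---is exactly the content of the careful bookkeeping in the proof of Theorem IV.

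Next I would apply the Higman Embedding Theorem \cite{Higman1961} to the finitely generated recursively presented group $H$, obtaining an embedding of $H$ into a finitely presented group $\G$. Here one uses the \emph{effective} form of that theorem: Higman's construction takes a recursive presentation of $H$ and returns an explicit finite presentation of $\G$ together with the embedding $H \hookrightarrow \G$, and this effectivity is visible in all the standard proofs (Higman's original argument via benign subgroups, as well as the later proofs of Valiev and Aanderaa). Composing the two effective embeddings yields an effective embedding $G \hookrightarrow H \hookrightarrow \G$ of $G$ into the finitely presented group $\G$, which is the assertion of the corollary.

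Granting the Higman Embedding Theorem, the only genuine work is in the first step: one must check that the $2$-generator embedding of \cite{HNN1949} simultaneously preserves recursive presentability and is algorithmic, uniformly in the input presentation of $G$. This is the step where a little care is required, since recursive presentability is a property of a chosen presentation rather than of the abstract group, so one needs to know that the presentation $\pres{}{a,b}{R'}$ delivered by Theorem IV is recursively enumerable uniformly in the presentation of $G$ one started from; the explicit formula \eqref{g_i} makes this transparent. Everything else is a composition of effective maps. Of course the genuinely deep ingredient is the Higman Embedding Theorem itself---the present corollary is simply its routine extension from the finitely generated to the general countable setting.
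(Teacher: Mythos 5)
Your proposal is correct and follows essentially the same route as the paper: embed $G$ into a $2$-generator group via the HNN construction, verify that this step is effective and preserves recursive presentability (which you track explicitly through the substitution \eqref{g_i}), and then apply the Higman Embedding Theorem to the resulting finitely generated recursively presented group. You are in fact slightly more careful than the text in attributing the $2$-generator embedding to Theorem IV rather than to Theorem II, which is where the paper points.
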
 

With this corollary in hand, it is easy to deduce the  existence of a finitely presented group with undecidable word problem. 
The proof of this result via Higman's Embedding Theorem is rightly regarded as the epitome of conceptual proofs of the existence of such groups; the fact that the deduction
 is so easy demonstrates the remarkable power of Higman's theorem.  The salient point here is  that
recursively presented groups with pathological properties are much easier to construct than finitely presented ones; groups
with unsolvable word problem are a case in point.  
 Higman \cite[p. 456]{Higman1961} notes that recursively presented groups with unsolvable word problem are already in the
 literature (e.g.~\cite{Britton1958}), but gives his own construction, as follows. Let $f \colon \mathbb{N} \to \mathbb{N}$ be any recursive function whose range is not a recursive set, examples of which are easy to construct \cite[Theorem~4.6]{Davis1958}, and let $G$ be generated by $a, b, c, d$  subject to the defining relations
\[
b^{-f(n)} a b^{f(n)} = d^{-f(n)} c d^{f(n)} \quad (n = 0, 1, 2, \dots).
\]
Then $G$ is recursively presented and is an amalgamated free product of the form $F_2\ast_{F_\infty}F_2$.
The normal form theorem for amalgamated free products tells us that
\[
b^{-m} a b^m = d^{-m} c d^m
\]
if and only if $m$ lies in the range of $f$, and hence the word problem for $G$ cannot be solved (because membership of the range of $f$ cannot be decided). Thus $G$ is a recursively presented group with undecidable word problem. And then, by the embedding theorem, we can now embed $G$ into a finitely presented group $G'$.  
The second important application of the Embedding Theorem that we want to 
highlight concerns the existence of universal finitely presented groups.

\subsection{Universal groups and other corollaries}\label{Subsec:universal-groups}
It is easy to enumerate the set of all finite presentations with generators drawn from a fixed alphabet $x_1, x_2, \dots$, and by from such an enumeration one can derive a recursive presentation of the free
product  $\Conv_{\substack{i \geq 1}} G_i$ of the groups $G_1, G_2, \dots$ given by these finite presentations (where we pay no attention to which of the $G_i$ are isomorphic).
 As before, by the HNN Theorem we can embed this countable group into a $2$-generator group in an effective way, which can in turn be embedded into a finitely presented group $G$ by the Higman Embedding Theorem. But then $G_i \leq G$ for every $i \geq 1$. In particular, we have just shown that \textit{there exists a finitely presented group containing an isomorphic copy of every finitely presented group}. Such groups are often called \textit{universal} (finitely presented) groups. The almost magical ease of this construction further underscores the remarkable power of Higman's theorem.

Higman's Embedding Theorem remains one of the most powerful tools in group theory.
We mention just a few of the many subsequent developments that it engendered. First, given a recursively presented group $G$ with solvable word problem, it is natural to ask whether it can be embedded into a finitely presented group with decidable word problem; and analogously for other Turing degrees of the word problem. This turns out to be true, in the sense that Higman's Embedding Theorem can be shown to be made to preserve the Turing degree of the word problem \cite{Clapham1964, Valiev1975}.  More geometric measures of the
complexity of the word problem have also been intensively studied \cite{mrb2002},  foremost among them isoperimetric functions. The most definitive result in this direction is due to Birget, Ol'shanskii, Rips \& Sapir \cite{Birget2002}, who proved that the word problem of a finitely generated group $G$ is decidable in non-deterministic polynomial time if and only if the group can be embedded in a finitely presented group that has  a polynomial isoperimetric function. 

With the  existence of universal finitely presented groups in hand, many questions about the nature of such groups suggest themselves.  We know from Theorem II that 2-generator examples exist, but what is the minimal number of 
defining relations needed?  Magnus's solution to the word problem for 1-relator groups \cite{Magnus1932} implies that one needs at least two,  but no better lower bound is known. For upper bounds, Boone \& Collins \cite{Boone1974} constructed an example that needs 26 relations, and Valiev \cite{Valiev1977} constructed an example with 21 relations; no better upper bound is known. It is also fruitful to ask what additional properties one can require of a universal group.  
For example,  Baumslag, Dyer and Miller \cite{BDM}  proved that there exist finitely presented universal groups $U$ that are acyclic,  i.e.~$H_i(U,\Z)=0$ for all $i\ge 1$, and they used this to show that an arbitrary
sequence of countable abelian groups $(A_n)$,  with $A_1$ and $A_2$ finitely generated,  will arise as the homology sequence 
$H_n(G, \Z)$ of some finitely presented group $G$,  provided that the $A_n$ can be described in an untangled recursive manner.  In \cite{mrb:baumslag} it is shown that one can simultaneously require that $U$ has no non-trivial finite quotients.

Finally,  although Higman's proof of the Embedding Theorem is effective,  it leaves open the challenge of finding ``natural" embeddings of  particular recursively presented groups.
For example,  it is easy to give a recursive presentation of the additive group of the rationals $(\Q, +)$, so one might expect there to be an explicit natural embedding into a finitely presented group.  This particular 
challenge, which was a personal  favourite of Higman,  was solved in 2022 by Belk, Hyde and Matucci \cite{Belk2022},
but the analogous challenges for many other groups remain open.

\section{Decomposing groups: topology,  splittings,  and graphs-of-groups}\label{Sec:Bass-Serre}

\noindent In the previous section, we discussed how HNN extensions and amalgamated free products allow 
one to build interesting groups.  This perspective is in keeping with the emphasis of \cite{HNN1949} on embedding 
given groups into larger groups with desirable properties.  But there is a dual perspective that is equally important to emphasize,
namely the idea that when a group of interest arises, one wants to break it into elementary pieces, preferably in a canonical way.  
In this section, we will discuss how HNN extensions and amalgamated free products emerge as basic operations of decomposition
for groups that arise in topology. This will lead us to a discussion of {\em Bass-Serre theory}, graph-of-groups decompositions, and 
group actions on trees, which is the setting in which HNN extensions are studied in contemporary mathematics.

\subsection{van Kampen's Theorem}\label{Subsec:cutting-up-spaces}
The origins of combinatorial group theory are deeply entwined with the early history of the fundamental group in topology,
and the central role that HNN extensions play in the decomposition of groups emerges from  \textit{van Kampen's Theorem}, 
which we shall now recall.   Let $X$ be a path-connected topological space. If $X$ can be expressed as the union of two open,  path-connected subspaces $A$ and $B$ so that $C=A\cap B$ is path-connected,
then we can choose a basepoint $x_0\in C$ and van Kampen's Theorem then states that the fundamental group $\pi_1(X,x_0)$ is the pushout of the diagram of groups
\begin{equation}\label{vK}
\pi_1(A,x_0) \leftarrow \pi_1(C,x_0)  \rightarrow \pi_1(B,x_0) 
\end{equation} 
where the homomorphisms are induced by the inclusions $A\hookleftarrow C \hookrightarrow B$.   
In general these homomorphisms will not be injective, but if they are then, comparing with \eqref{amalgam}, we deduce that $\pi_1X$ is an amalgamated free product of $\pi_1A$ and $\pi_1B$ with amalgamated subgroup $\pi_1C$. Colloquially, we can summarize this by saying that if cutting  along $C$ decomposes $X$ into two pieces, and $C\hookrightarrow X$ is {\em $\pi_1$-injective} -- i.e.\ the map $\pi_1C\to \pi_1X$ induced by inclusion is injective -- then $\pi_1X$ decomposes as an amalgamated free product of the fundamental groups of the two complementary pieces. 

This discussion raises the question of what happens when we cut a space along a $\pi_1$-injective subspace $C\hookrightarrow X$ that does not separate $X$. The answer, which follows from van Kampen's Theorem, is that provided $C$ sits nicely in $X$ (e.g.~if $C$ has a neighbourhood homeomorphic to $C\times (0,1)$),  then $\pi_1X$ decomposes an HNN extension of the form $G\ast_H$ where $H$ is the fundamental group of $C$ and $G$ is the fundamental group of $X\smallsetminus C$.  More generally, using the language of \textit{graphs of groups} discussed in the next section,  
if one cuts $X$ along a family of nicely embedded, disjoint, $\pi_1$-injective subspaces $C_\lambda\ (\lambda\in\Lambda)$, then $\pi_1X$ is isomorphic to the fundamental group of the graph of groups that encodes this decomposition: the unoriented edges of the graph are indexed by $\Lambda$, there is a vertex for each path component of $X\smallsetminus \bigcup C_\lambda$,  the edge and vertex groups are the fundamental groups of the corresponding subspaces of $X$, and the inclusions of edge groups into vertex group are induced by the inclusions of subspaces.

All groups can be realized as fundamental groups of reasonable spaces, with the nature of the space reflecting the nature of the group. For example, every finitely presented group is the fundamental group of a $2$-dimensional simplicial complex with finitely many cells, and of a smooth, compact $4$-dimensional manifold.  Thus the preceding discussion linking decompositions of spaces to
decompositions of groups relates to all groups. We refer the reader to the article of Scott and Wall \cite{Scott1979} for a thorough treatment of this and related topological methods in group theory.

The history of van Kampen's Theorem is presented well by Gramain \cite{Gramain1992}.  We will not rehearse it here, but we   mention a few highlights of note. In 1931, Seifert \cite{Seifert1931}, as part of his doctoral studies, proved van Kampen's theorem in the case where $X$ is a simplicial complex and $A$ and $B$ are subcomplexes of $X$. (This is why one sees references to the \textit{Seifert--van Kampen Theorem}).  In 1933, van Kampen \cite{vanKampen1933a, vanKampen1933b} proved the result in the generality we have described. His initial motivation came from the desire to have a topological method for computing
 the fundamental group of the complement of an algebraic curve in $\mathbb{CP}^2$,  a problem that Zariski had suggested to him.
Van Kampen solved this problem in  \cite{vanKampen1933b}, and in a second article immediately following it in the journal \cite{vanKampen1933a},  he clarifies the general theorem underlying his proof and states what we today recognize as van Kampen's Theorem. In a third paper immediately following these \cite{vanKampen1933c}
he introduces van Kampen diagrams and proves {\em van Kampen's Lemma}.   This  third
paper was overlooked for decades,  but the diagrammatic method introduced there played a central role in the
development of small cancellation theory from the 1960s onwards, and van Kampen's Lemma became a key tool in the study
of isoperimetric inequalities for groups following the work of Gromov \cite{gromov87, gromov93}; see \cite{mrb2002}.

\subsection{Graphs of groups}\label{ss:g-og-g}
The discussion of van Kampen's Theorem returns us to a point that emerged from a different viewpoint in the HNN paper \cite{HNN1949},  where it was repeatedly emphasized that it can be useful to perform several HNN extensions (and amalgamated
free products) simultaneously.  In anticipation of our discussion of Bass-Serre theory in  \S\ref{Subsec:Bass-Serre-theory},
we recall a useful formalism for doing this.

Following Serre \cite{SerreOriginal},  a {\em{graph}} is defined to consist of two sets $V$ and $E$ (thought of as vertices and oriented edges) and two maps (thought of as taking endpoints) $o,  t : E\to V$,  together with a free involution of $E$, denoted by $e\mapsto \-{e}$, such that $o(\-{e})=t(e)$ for all $e\in E$.  In practice, one pictures the topological graph whose vertices are indexed by $V$ and whose 1-cells are indexed by the ``unoriented edges", i.e. ~pairs $|e|:=\{e, \-{e}\}$. We restrict attention to the case where this topological graph is connected.

A {\em graph of groups} $\mathbb{G}$ is an assignment of groups $G_v  (v\in V)$  and $G_e  (e\in E)$  and injective homomorphisms $i_e: G_e\to G_{t(e)}$, with $G_e=G_{\-{e}}$. Serre \cite{SerreOriginal} introduces the {\em fundamental group} $\pi_1(\mathbb{G})$  of a graph of groups.  
In this language, the diagram (\ref{amalgam}) represents an ``edge of groups" and the fundamental group is the amalgamated free product associated to (\ref{amalgam}). More generally, if the underlying topological graph is a tree, then $\pi_1(\mathbb{G})$  is the pushout of the diagram of groups that represents it.  One sees that the natural maps from the vertex groups $G_v$ to this pushout are injective by starting with the one-edge case (amalgamated free products) and arguing by induction,  taking a direct limit if the graph is
infinite, this last step being what \cite{HNN1949} called a {\em union in the usual ``group tower" sense}.

When the underlying graph contains loops,  the definition of $\pi_1(\mathbb{G})$ is more subtle. The first case to consider is
where the graph   has one vertex $v$ and one unoriented edge $|e|$. In this case the fundamental group is the HNN extension of $G_v\ast_{G_e}$ corresponding to the isomorphism of subgroups 
$i_{e}\circ i_{\-{e}}^{-1}: i_{\-{e}}(G_e) \to i_e(G_e)$.  The fundamental group of an arbitrary graph of groups can be calculated in two steps: first we choose a maximal tree $T$ in the underlying graph and  take the fundamental group $\pi_1(\mathbb{T})$  of the  tree of groups over this, then we take a multiple HNN extension  of $\pi_1(\mathbb{T})$ with stable letters and associated subgroups indexed by the set of unoriented edges $\{e,\-{e}\}$ in the complement of $T$. It is important to note that the second step in this procedure corresponds exactly to the construction described in Theorem II of \cite{HNN1949} where arbitrary index sets are allowed. In this sense, the construction of the fundamental group of an arbitrary graph of groups is already present in \cite{HNN1949}, albeit in a different language, and without the  geometric imagery that renders the connection to van Kampen's theorem so transparent.

\subsection{Splittings, large scale geometry,   and hierarchies} \label{Subsec:splittings-hierarchies}
 
In the light of the preceding discussion, the reader will see why one says that a  group $G$ \textit{splits} over a subgroup $H<G$ if it decomposes as an amalgamated free product $G=A\ast_H B$ or an HNN extension $G=B\ast_H$.  (In \S\ref{Subsec:Bass-Serre-theory} we shall see that this is equivalent to saying that   $G$ acts on a simplicial tree and $H$ is the stabilizer of some edge in that tree.)
 
 In 1971, Stallings \cite{Stallings1971}  proved that a finitely generated group splits as an amalgamated free product or HNN extension over a finite group if and only if the Cayley graph of the group, with respect to any finite generating set, can be separated into two or more unbounded components by deleting a compact set.\footnote{Many of the key ideas behind this result can already be found in Stallings' 1968 paper covering the torsion-free case \cite{Stallings1968}; cf.\ also \cite{Cohen1970}.} This ``Ends Theorem" is one of the founding results of geometric group theory: it was revolutionary in establishing an equivalence between a feature of the large scale (``coarse") geometry of the group and a fine algebraic property of the group.  It inspired many related results of the following form: one proves that if a set that is small, in a suitable sense,  coarsely separates the Cayley graph of a group, then the group splits over a subgroup that is small in a corresponding sense.  Thus, for example,  Papasoglu \cite{Papasoglu2005} proved that if a finitely presented group is torsion-free and not a free product, then it splits over $\Z$ if and only if the Cayley graph can be coarsely separated in an essential way by a quasi-line (i.e.\ a connected subset that, in the induced metric, is Lipschitz equivalent to $\mathbb{R}$).   Bowditch \cite{Bowditch1998} proved another important result in this direction.

The relationship between the geometry of a finitely generated group,  the algebraic properties of the group,  
 and the ways in which the group splits is a central theme in modern group theory, too large to cover here,  but there are some highlights that we should mention because they are
important milestones in the evolution of HNN extensions.    \textit{One-relator groups} deserve particular mention.
The theory of such groups rests on inductive arguments that exploit
the {\em Magnus--Moldavanskii hierarchy,}  whereby a one-relator group is decomposed as
a one-relator group as an HNN extension of a ``smaller'' one-relator group with free associated subgroups; we refer the reader to \cite{McCool1973} for a classical perspective and more details, and \cite{LintonNB, Linton2025} for a more modern and topological viewpoint.  Much of the theory of {\em limit groups},  developed by Sela in connection with the Tarski problem (see \cite{sela} et seq.),  also rests on the existence of hierarchical decompositions involving amalgamated free products and HNN extensions of a controlled
form; see also \cite{KharlMias}. A wider class of groups intensively studied in contemporary geometric group theory  consists of the groups admitting \textit{quasiconvex hierarchies},  where the types of subgroup along which one splits is broader, but one retains tight control,
allowing one to build illuminating actions of the groups on non-positively curved spaces.  This rich and highly influential theory, developed by Dani Wise \cite{Wise2009},  has underpinned many of the most spectacular breakthroughs in geometric group theory
in recent years.

\subsection{Existence and uniqueness of splittings} Let us deal first with the issue of uniqueness.  
If a group splits as an  HNN extension, amalgamated free product or more generally  a graph of groups,
then it is natural to ask how unique this splitting might be, 
how one might classify and parameterize all splittings of a given type,  and under what circumstances  one say that 
there exist a canonical ``best'' splitting.  The study of such questions is a subtle subject that we cannot do justice to here,
but we will say something about \textit{JSJ decompositions}.

\textit{JSJ decompositions} originate in 3-manifold theory and the initials JSJ record the fundamental work of Jaco \& Shalen \cite{JacoShalen1978} and Johannson \cite{Johan1979}.  Every closed, orientable
3-manifold that is orientable can be cut into geometric pieces
along a  canonical collection of  embedded,  $\pi_1$-injective,   annuli and tori.  According to
van Kampen's Theorem,  this decomposes the fundamental group of the manifold as a graph of groups with edge groups
that are cyclic or free-abelian of rank $2$. Remarkably,  key features of this decomposition, which {\em a priori} seems so rooted in 3-manifold topology,  can be generalized to finitely presented groups, provided that one restricts attention to splittings with
edge groups that are small in a suitable sense, e.g.\ cyclic or abelian.  
 This revelation was established in the work of Rips \& Sela \cite{Rips1997} and advanced by many authors, notably Bowditch \cite{Bowditch1998}, Dunwoody \& Sageev \cite{DS99},  and Fujiwara \& Papasoglu \cite{FP06}. The extent to which the splitting is unique depends heavily on the class of edge groups that one allows
in the splitting.  We refer to the monograph of Guirardel \& Levitt \cite{GuirLev} for further references and a thorough account of the state of the art, including the theory of deformation spaces,  which provides the language  needed to express uniqueness appropriately.
 
 Let us turn now to the question of existence of splittings.  Given a group,  how can one decide
 whether or not it splits as an HNN extension or amalgamated free product in a non-trivial way? This is a very difficult question, indeed undecidable in general, and to give an indication of the difficulty we note some natural, and historically pivotal, examples coming from matrix groups. It is well-known and classical that $\SL_2(\Z)$ can be expressed
 as an  amalgamated free product of cyclic groups $C_4 \ast_{C_2} C_6$.  Less obviously, 
 $\SL_2(\Q_p)$ also  splits  --  in a ``somewhat mysterious manner'',  to quote J-P. Serre \cite[p.~3]{SerreOriginal}.  
  In contrast,  for $n \geq 3$ the group $\SL_n(\Z)$ does \textit{not} split as a  non-trivial  HNN extension or amalgamated free product.  One cannot hope to see this by staring at group presentations, so how is one to prove it?  This is the starting
  point of {\em Bass-Serre theory} -- instead of staring at a presentation of $\SL_3(\Z)$ and trying to figure out why
  it does {\em not} do something,  one translates the question into one about {\em group actions} and proves that there is
  an equivalent thing that the group {\em does} do -- in this case, fix a point whenever it acts on a tree.

\subsection{Bass-Serre theory: trees, amalgams, and $\operatorname{SL}_2$}\label{Subsec:Bass-Serre-theory}
The Nielsen--Schreier Theorem states that \textit{every subgroup of a free group is free}. Nielsen \cite{Nielsen1921} gave a proof of this for finitely generated subgroups in 1921, with a purely combinatorial argument, whereas Schreier \cite{Schreier1927} gave a more topological proof in 1927.  To a topologist,  thinking of free groups as the fundamental groups of graphs,  this theorem is an immediate
consequence of the theory of covering spaces:  \textit{a group is free if and only if it acts freely on a tree}.  
Serre \cite[Théorème~4]{SerreOriginal}  gives this proof 
and it serves as a paradigm for his theory of groups acting on trees,  wherein he provides 
elegant {\em conceptual} proofs for classical results about HNN extensions and amalgamated free products, and opens the door to a host of powerful applications. 

Important motivation for Serre came from the appearance of trees in the work of Jacques Tits on algebraic and simple groups \cite{Tits1964,  Tits1970}.  Another important string of ideas can be traced through the 1959 paper of H.\ Nagao \cite{Nagao1959} where he proved that $\GL_2(\mathbf{k}[x])$ is not finitely generated whenever $\mathrm{k}$ is a field: he proved that $\GL_2(\mathbf{k}[x])$ admits a decomposition as a free product with amalgamation (although without using this language), and his proof of this fact is essentially a direct sequence of manipulations of relations combined with elementary results about matrices.  In line with Nagao's result,  Y.\ Ihara \cite{Ihara1966b}
whose motivations were number-theoretic,  proved  the following famous theorem by expressing $\SL_2(\Q_p)$ as the free product of two copies of $\SL_2(\Z_p)$ over a common subgroup:
{\em every discrete torsion-free subgroup of $\SL_2(\Q_p)$ is free.} Demystifying this result was the aim of a course Serre gave in 1968/69 at the Coll\`{e}ge de France; a course that would lay out what is today known as \textit{Bass--Serre theory}. 

Serre's treatment of the subject is based on his definition of a {\em graph of groups}, which we described in \S\ref{ss:g-og-g}. The real power of Serre's work lies with his interpretation of a graph of groups $\mathbb{G}$ as the quotient data for an action of $\G=\pi_1\mathbb{G}$ on a tree and, crucially, his proof that one can reconstruct $\G$ and the action on the tree from this data. Specifically, if $\G$ is a group acting (without inversions) on a tree $X$, then we can associate a
 {\em quotient  graph of groups} $\mathbb{G}(\G, X)$ to this action: its underlying graph is the quotient $X / \G$, and the vertex and edge groups of $\mathbb{G}(\G, X)$ are 
 stabilizers of vertices and edges in a fundamental domain for the action (with care needed when the fundamental domain is not strict). The following result  is sometimes called the ``fundamental theorem of Bass--Serre theory''.

\begin{theorem*}[{Serre \cite[Théorème~13]{SerreOriginal}}] If $\G$ is a group acting without inversions on a tree $X$,
then $\G\cong \pi_1 \mathbb{G}(\G, X)$.  Conversely,  for every graph of groups $\mathbb{G}$, there exists an action of $\pi_1\mathbb{G}$ on a tree  $X$ such that the quotient graph of groups is $\mathbb{G}$.
\end{theorem*}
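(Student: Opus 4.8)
The plan is to prove the two directions separately, since each requires a different construction; the second (converse) direction is the one that genuinely encodes the HNN/amalgamation machinery, while the first is an organizational statement about actions.

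\medskip

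\noindent\emph{The converse direction: constructing a tree from a graph of groups.}
Given a graph of groups $\mathbb{G}$ with underlying connected graph having vertex set $V$ and edge set $E$, and given $\G := \pi_1(\mathbb{G})$, the plan is to build the \emph{Bass--Serre tree} $X$ explicitly as a quotient of a disjoint union of cosets. First I would recall the description of $\pi_1(\mathbb{G})$ obtained in \S\ref{ss:g-og-g}: pick a maximal tree $T$ in the underlying graph, take the fundamental group $\pi_1(\mathbb{T})$ of the induced tree of groups (an iterated amalgamated free product, built up as in (\ref{amalgam}) with the injectivity of the vertex maps guaranteed by induction and a direct limit), and then form the multiple HNN extension over the edges of $E \smallsetminus T$ exactly as in Theorem II of \cite{HNN1949}. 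I would then define the vertex set of $X$ to be $\coprod_{v\in V}\G/G_v$ and the edge set to be $\coprod_{e\in E}\G/i_e(G_e)$, with the obvious incidence maps $g\,i_e(G_e)\mapsto g\,G_{t(e)}$, and the involution induced by swapping $e\leftrightarrow\bar e$ together with the stable letter (for edges outside $T$). The key claims to verify are: (a) $X$ is connected, which follows because the underlying graph is connected and $\G$ is generated by the vertex groups together with the stable letters; and (b) $X$ has no circuits, i.e.\ $X$ is a tree. Claim (b) is precisely where Britton's Lemma enters: a reduced circuit in $X$ would yield a reduced word in the $\pi_1$-normal form (alternating vertex-group elements and edge/stable letters) that evaluates to an element fixing the initial vertex, and the normal form theorem for amalgamated free products (\S\ref{Subsec:amalgamated-free-products}) together with Britton's Lemma for the HNN stages forces such a word to be trivial, contradicting reducedness. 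Finally one checks that $\G$ acts on $X$ by left translation, without inversions, and reads off that the quotient graph is the original underlying graph with the prescribed vertex and edge stabilizers; this recovers $\mathbb{G}$ on the nose.

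\medskip

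\noindent\emph{The forward direction: recovering $\pi_1\mathbb{G}(\G,X)$ from an action.}
Suppose $\G$ acts on a tree $X$ without inversions. The plan is to choose a fundamental domain: pick a lift $\widetilde{Y}\subset X$ of a maximal tree of the quotient $X/\G$, and extend it to a set of representative edges for all orbits (one must be careful when the quotient graph has loops, so that the chosen edge lift and its reverse need not both lie over $\widetilde Y$ — this is the ``care needed when the fundamental domain is not strict'' flagged in the excerpt). Set $G_v$ and $G_e$ to be the $\G$-stabilizers of the chosen vertex and edge representatives; for each oriented edge $e$ with terminal vertex in the tree part, the inclusion $G_e\hookrightarrow G_{t(e)}$ is literally the inclusion of stabilizers, and for the finitely-or-infinitely many edges outside the maximal-tree part one records, for each such $e$, an element $t_e\in\G$ carrying the lift of $t(e)$ back to its representative; these $t_e$ will be the stable letters. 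I would then show directly that $\G$ is generated by $\bigcup_v G_v$ together with $\{t_e\}$ (using connectedness of $X$ and a standard ``walk along a path, correct at each step'' argument), and that the only relations among these generators are the defining relations of the vertex groups together with the HNN/amalgamation relations $t_e^{-1}\,i_{\bar e}(c)\,t_e = i_e(c)$ for $c\in G_e$; establishing that there are \emph{no further relations} is the crux and is done by comparing the two actions — the given action of $\G$ on $X$ and the action of $\pi_1\mathbb{G}(\G,X)$ on its own Bass--Serre tree constructed in the converse direction — and using the fact that both trees have the same quotient graph-of-groups data, so the natural surjection $\pi_1\mathbb{G}(\G,X)\twoheadrightarrow\G$ induces an equivariant map of trees that is both injective (by the normal-form/Britton argument above) and surjective (by connectedness), hence an isomorphism.

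\medskip

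\noindent The main obstacle I anticipate is not either surjectivity statement — those are routine path-chasing — but rather the \emph{injectivity}, equivalently the assertion that the natural word in the fundamental group of a graph of groups that traces out a loop in the Bass--Serre tree cannot be trivial unless the loop backtracks. This is exactly the normal form theorem for $\pi_1(\mathbb{G})$, and the cleanest route is to prove it by reducing to the two base cases already available to us: amalgamated free products, whose normal form is quoted from \cite[p.~181]{Lyndon2001}, and single-edge-loop HNN extensions, governed by Britton's Lemma; the general case then follows by the maximal-tree-plus-multiple-HNN decomposition, invoking Theorem II of \cite{HNN1949} to handle the simultaneous HNN stages and a direct limit when the graph is infinite. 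Everything else — equivariance, the no-inversions hypothesis, the bookkeeping around non-strict fundamental domains — is careful but standard, and I would relegate it to remarks rather than grinding through it.
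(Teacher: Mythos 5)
The paper does not actually prove this theorem: it states it with a citation to Serre's Th\'eor\`eme~13 and moves on, so there is no in-paper argument to compare against. Measured against Serre's proof, your proposal is essentially the canonical one — build the Bass--Serre tree from cosets, verify it is a tree via the normal form theorem for $\pi_1(\mathbb{G})$, and in the forward direction compare the given $\G$-tree with the Bass--Serre tree of the quotient data by a $\phi$-equivariant map, where $\phi\colon\pi_1\mathbb{G}(\G,X)\twoheadrightarrow\G$ is the natural surjection. Two points should be tightened. In the converse direction, ``the obvious incidence maps'' conceal a genuine choice: the terminal vertex of the edge $g\,\alpha_e(G_e)$ is $g\,G_{t(e)}$, but its reverse must be $g\,g_e\,\alpha_{\bar e}(G_{\bar e})$, where $g_e$ is trivial for $e$ in the maximal tree and is the stable letter otherwise; without these twisting elements the involution does not interact correctly with the endpoint maps, and the verification that left translation is well-defined and without inversions depends on getting this right. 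In the forward direction, injectivity of the equivariant tree map is not what Britton's Lemma supplies directly; rather, the normal form theorem certifies that the source is a tree, one separately checks that the map is a local isomorphism of graphs because the vertex and edge stabilizers agree orbit by orbit, and then a local isomorphism from a connected graph onto a tree is an isomorphism. Injectivity of $\phi$ follows because any element of $\ker\phi$ fixes the source tree pointwise, hence lies in a vertex group $G_{v_0}$, on which $\phi$ is injective by construction. Finally, your reduction of the reduced word theorem to its two base cases --- amalgam normal forms plus Britton's Lemma, glued via a maximal tree, Theorem~II of \cite{HNN1949}, and a direct limit --- is a legitimate alternative to Serre's unified reduced word theorem (his Th\'eor\`eme~11) and is exactly the route that \S\ref{ss:g-og-g} of the present paper gestures at; the content is the same, but Serre's version avoids the case split and is what you would want if you later need the normal form in full strength.
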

The tree $X$ in the second clause is called the {\em Bass-Serre tree} of the splitting $\mathbb{G}$.

We pause briefly to discuss the history of Serre's famous book and the way in which the material is presented. In his course at the Coll\`{e}ge de France, Serre only discussed the cases where the quotient graph of the action  is a single edge or a loop,  corresponding to the case of amalgamated free products and HNN extensions. In the audience was H.\ Bass, who in the fall of 1968 was on sabbatical in Paris, and who had collaborated with Serre (and Milnor) on the solution to the Congruence Subgroup Problem \cite{Bass1967}.  Serre invited Bass to develop notes for the course, which he gladly accepted. The two only had brief, infrequent consultations in Parisian sidewalk cafés regarding the notes,  but at one of these conversations Serre suggested that the two cases detailed in the course (segments and loops) might be developed into a more general theory. Bass took on this task and developed the notion of a graph of groups as we have presented it, giving rise to Bass--Serre theory as we know it today. The notes appeared with the title ``\textit{Arbres, amalgames, $\operatorname{SL}_2$}'' \cite{SerreOriginal},  later translated by J.\ Stillwell into English with the simpler title ``\textit{Trees}'' \cite{SerreTranslated}.\footnote{The notes were also privately circulated earlier under the title ``\textit{Groupes discrets}'', which consisted approximately of Chapter I and the first part of Chapter II of the later published notes.} A curious point is that the terminology ``HNN extensions'' and the associated results from \cite{HNN1949} seem to have been absent from the course. As Bass gave seminars on Bass--Serre theory at Queen Mary College in London and elsewhere,  the link with the established usage was made and found its way, via Bass, into the  notes \cite{SerreOriginal}.

We now return to the original applications of Bass--Serre theory. There are two parts of the notes \cite{SerreOriginal} -- Chapter I and Chapter II. The second chapter is devoted to the study of $\SL_2$ and  $\GL_2$ over various rings and fields, via their actions on trees.  It includes elegant conceptual proofs of the theorems of Nagao and Ihara mentioned above as well as  a number of applications of a more arithmetic nature.  But since our main interest is in following the development of HNN extensions, we concentrate on the first chapter,  which has become required reading for all beginning students of geometric group theory.

Two results presented by Serre serve to underscore clearly the conceptual advantage of shifting from an algebraic viewpoint on HNN extensions and amalgamated free products to the viewpoint of groups acting on trees; both can be proved algebraically and were well known at the time, but in both cases the geometric viewpoint lends new insight. First, there is the theorem that a finite subgroup
of an HNN extension $G\ast_H$ must be conjugate to a subgroup of $G$, and similarly any finite subgroup of an amalgamated free product $A\ast_C B$ must be conjugate to a subgroup of $A$ or $B$. In both cases, this is an immediate consequence of the easy fact that a finite group acting on a tree without inversions must fix a vertex.

The second result, which we state  only for the HNN case $\G = G\ast_H$, is the following subgroup result: if a subgroup $\Lambda < \G$ does not intersect any conjugate of $H$ non-trivially, then it is a free product of a free group and groups of the form $\Lambda\cap G^x$, where $G^x$ is a conjugate of $G$ in $\Lambda$.  This is proved by noting that $\Lambda$ acts on the  Bass-Serre tree $X$ for the splitting $\G = G\ast_H$ and the edge-stabilizers are the intersections of $\Lambda$ with the conjugates of $H$,  hence are trivial,  while  the vertex stabilizers are intersections of $\Lambda$ with conjugates of $G$.  The splitting  of $\Lambda$ described in the theorem is the quotient $\mathbb{G}(X,\Lambda)$. One can sharpen the theorem by noting that the vertices in the quotient graph  are indexed by double-cosets $\Lambda\backslash \G /G$. As a special case, this theorem recovers the classical Kurosh Subgroup Theorem for free products of groups. 

These applications provide elegant insights into previously known results, but the next application that we want to discuss
is much more original and powerful. The methods of Bass--Serre theory provide an indispensable tool for showing that certain groups do \textit{not} decompose as an HNN extension or  amalgamated free products. 
Following Serre,  one says that a group $G$ has \textit{property} (\rm{FA})\footnote{Of course, the letter F indicates (point) \textit{fixe}, and the letter A indicates \textit{arbre}.} if   $G$
fixes a point whenever it acts on a tree with inversions.  If a group has this property then it cannot split non-trivially as an HNN extension or amalgamated free product.  Serre phrases this  as follows: for countable groups
$G$, property $\mathrm{(FA)}$ is equivalent to the conjunction of the following three conditions,
\begin{enumerate}
\item $G$ does not split as a non-trivial amalgamated free product,
\item $H^1(G, \Z) = 0$, i.e.\ $G$ does not surject $\Z$. 
\item $G$ is finitely generated.  
\end{enumerate}
The case of splitting as an HNN extension is absorbed into the second condition. The finite generation condition is necessary because if $G$ can be written as a strictly ascending union $G=\bigcup_n G_n$, then $G$ is the fundamental group of the graph of groups with vertices indexed by $n$, where the vertex group at $v_n$ is $G_n$ and the  group on the edge joining   $v_n$ to $v_{n+1}$ is also $G_n$; the action on the Bass-Serre tree of this splitting does not have a fixed point.  (This last argument points to the fact that one can drop
the assumption that $G$ is countable if one replaces the  condition of finite generation with the assumption that $G$ is not a strictly ascending union of a sequence of subgroups.)

One can use this theorem to prove that  $\SL_n(\Z)$ does not split  non-trivially if $n\ge 3$ by showing
that it fixes a point whenever it acts on a tree.  For  $\SL_n(\Z)$ this is not difficult to prove --  in Serre's notes it takes less than a page \cite[p. 94]{SerreOriginal} -- and many other groups of geometric interest have since been shown to have property (FA).

\subsection{Developments from Bass-Serre Theory}\label{Subsec:modern-Bass-Serre} 
We have seen how the theory of HNN extensions got absorbed into Bass-Serre theory.   
In the early 1970s, there were still many results appearing in the literature concerning the subgroup structure of free products and HNN extensions,  expressed in the traditional language of combinatorial group theory, e.g.~\cite{Karrass1970, Karrass1971, Burns1972, Burns1973}. 
D.\ E.\ Cohen, who learned of Bass--Serre theory from the privately circulated notes predating \cite{SerreOriginal}, realized that the techniques of this new theory could be used to give conceptual proofs of these subgroup theorems, and indeed could strengthen some of them, essentially by arguing with the covering-space theory of graphs-of-groups. Cohen explained this at the conference \textit{Infinite Groups} in Calgary in 1974 and in the subsequent papers \cite{Cohen1974, Cohen1976}.  

 I.\ M.\ Chiswell \cite{Chiswell1976b} was another early adopter of Bass-Serre theory,  using
it to give a new proof of the \textit{Grushko--Neumann Theorem} (proved independently by I.\ A.\ Grushko \cite{Grushko1940} and B.\ H.\ Neumann \cite{Neumann1943}), which states that the minimum number of elements needed to generate a group is additive with respect to free products: $d(G_1 \ast G_2) = d(G_1) + d(G_2)$.  In an important demonstration of the power of topological methods, J.R.~Stallings \cite{stall1965} had given an elegant and conceptual proof of this theorem in 1965, greatly simplifying the original,  and Chiswell's proof refined this further.  In a different direction, Chiswell's work on length functions for groups,  which built on an idea of
 Lyndon \cite{Lyndon1963},  also played an important role, along with the paper of Alperin and Moss \cite{AM85}, in the broadening of Bass-Serre theory towards actions on $\mathbb{R}$-trees. By definition, an $\mathbb{R}$-tree is a geodesic metric space in which every pair of points is joined by a unique topological arc. This subject was developed extensively by M.~Culler,  J.~Morgan and P.B.~Shalen \cite{CM,CS83,MS84} with an eye to important applications in low-dimensional topology. It was realized that group actions on $\mathbb{R}$-trees appear naturally as points at infinity in character varieties and as dual objects parameterizing the measured laminations that appear as ideal points in Thurston's compactification of Teichm\"{u}ller space \cite{MS84}. The theory of groups acting on  $\mathbb{R}$-trees blossomed into a rich subject with myriad applications across geometric group theory and low dimensional topology; we refer to \cite{Best,Shalen} for an overview. A crucial link between group actions on $\mathbb{R}$-trees and classical Bass-Serre theory was forged by E.~Rips who developed a classification theory for group actions on $\mathbb{R}$-trees that, under reasonable hypotheses,  allows  one to promote actions of finitely presented groups on $\mathbb{R}$-trees that have small  arc stabilizers,  to actions on simplicial trees with edge-stabilizers that
 are similarly small \cite{BesFei, GLP}. He also proved that the only finitely generated groups that can act freely on $\mathbb{R}$-trees are free products of free and surface groups.

We should also mention the important article of Scott \& Wall \cite{Scott1979} from 1979, which was very influential in conveying the ideas and potential of Bass-Serre theory to a broad audience,  recasting it in the familiar language of covering space theory, and  framing many of its ramifications in the context of the wider development of  topological methods in group theory, which was the subject of a landmark LMS  Symposium in Durham in 1977.

It would be remiss to end this tour through the legacy of the HNN paper without touching on the issue of whether the Bass-Serre theory of graphs-of-groups can be extended to a theory of higher-dimensional {\em complexes of groups}, and an associated theory
of groups acting on higher-dimensional analogues of trees. In short,  there is such a theory, indeed a rich and powerful one, developed  by A.~Haefliger \cite{haef} and laid out most fully in \cite[Chapter III.C]{BH}.  But the Developability Theorem at the heart of the theory
is by necessity less straightforward than the fundamental theorem of Bass-Serre theory,  and the general theory is
a lot more technical than in dimension one,  with correspondingly fewer applications.  A key feature of 
the theory in higher dimensions is that it works best in the presence of a non-positive curvature condition on the underlying spaces, a feature that is obscured in the one-dimensional case (graphs) by the fact that in the obvious metric, all graphs are negatively curved. 

We close by emphasizing once again that HNN extensions, used explicitly or embedded in Bass-Serre theory, are indispensable tools at the heart of modern geometric group theory,  with far more applications than can touched upon in an article of this length. The memorable applications given in the 1949 paper \cite{HNN1949} -- embedding countable groups into 2-generator groups, and constructing infinite groups with only two conjugacy classes -- remain landmark theorems,  but the abiding importance of \cite{HNN1949} lies with the authors' discovery of the HNN extension,  which they used to solve the most pressing question of their time: {\em A group $G$ contains two subgroups $A$ and $B$. The question then arises whether a group $H$ exists, containing $G$, within which $A$ and $B$ are conjugate.}

\bibliographystyle{amsalpha}
\bibliography{CF-MB-HNN-submit.bib}

\end{document}